\theoremstyle{plain}
  \newtheorem{thm}{Theorem}[section]
  \newtheorem{lem}[thm]{Lemma}
  \newtheorem{prop}[thm]{Proposition}
  \newtheorem{cor}[thm]{Corollary} 
\theoremstyle{definition}
  \newtheorem{defn}[thm]{Definition}
  \newtheorem{rmk}[thm]{Remark}
  \newtheorem{ex}[thm]{Example}
\theoremstyle{plain}
\DeclareMathOperator{\im}{im}
\numberwithin{equation}{section}
\title{An upper bound for the Lusternik-Schnirelmann category of relative Sullivan algebras}
\author{Jiawei Zhou}
\date{July 3, 2024}
\begin{document}

\maketitle

\begin{abstract}
This paper addresses a question posed by F\'elix, Halperin and Thomas. We prove that the Lusternik-Schnirelmann category of a relative Sullivan algebra is finite if such invariants of the base algebra and fiber algebra are both finite. Furthermore, we provide a similar estimation for the Toomer invariant.
\end{abstract}

\section{Introduction}

The Lusternik-Schnirelmann category (LS category), denoted as $cat(X)$, of a topological space $X$, is defined as the smallest non-negative integer $m$ such that $X$ can be covered by $m+1$ contractible open subsets. This concept was originally introduced in \cite{LS} with the aim of estimating the number of critical points of smooth functions on a manifold.

While the definition is straightforward, computing the LS category is generally not easy. To estimate it, several other invariants are introduced, such as the cone length, the Toomer invariant, and the cup length. In rational homotopy theory, these invariants also have their rational versions, which are defined as the smallest among all equivalent spaces. This equivalence is established by continuous maps that induce isomorphisms on rational homologies.

Sullivan \cite{sullivan} employed commutative differential graded algebras (CDGA) as models to represent these equivalence classes. Every equivalence class contains a special type of CDGAs, which are called the Sullivan algebras, along with the Sullivan models of the corresponding spaces. For simply connected spaces, the rational LS category and other rational invariants can be computed from the algebraic structures of their Sullivan models. This approach extends to defining analogous invariants of quasi-isomorphisms for all CDGAs.

Many fibrations can be represented by relative Sullivan algebras. Consider a fibration $F\to E\to B$. If $\Lambda Z$ and $\Lambda Z\otimes\Lambda W$ are Sullivan models of $B$ and $E$ respectively, then $\Lambda W$ is a Sullivan model of $F$ under certain conditions, for example, that $F$ and $B$ are simply connected with either $H^*(F,\mathbb{Q})$ or $H^*(B,\mathbb{Q})$ being of finite type. However, when these conditions are not met, the Sullivan model of $F$ can differ significantly from $\Lambda W$.
 
It is easy to see that $cat(E)\leq(cat(F)+1)(cat(B)+1)-1$. Consequently, in their book \cite{FHT2}, F\'elix, Halperin and Thomas posed the question of whether the LS category of $\Lambda Z\otimes\Lambda W$ can also be bounded by those of $\Lambda Z$ and $\Lambda W$. This paper will give a positive answer.

\begin{thm}\label{estimate cat intro}
Let $\Lambda Z\otimes\Lambda W$ be a relative Sullivan algebra. Suppose that $cat(\Lambda Z)=m$ and $cat(\Lambda W)=n$, then $cat(\Lambda Z\otimes\Lambda W) \leq (m+1)(n+2)-2$. Moreover, if $\Lambda Z\otimes\Lambda W$ itself is a minimal Sullivan algebra, then $cat(\Lambda Z\otimes\Lambda W) \leq (m+1)(n+1)-1$.
\end{thm}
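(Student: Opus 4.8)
The plan is to reduce to the retraction description of the Lusternik–Schnirelmann category and then imitate, in the category of relative Sullivan algebras, the classical estimate $cat(E)\le(cat(B)+1)(cat(F)+1)-1$ for a fibration $F\to E\to B$. Recall that for a Sullivan algebra $(\Lambda X,d)$ one has $cat(\Lambda X)\le N$ exactly when, after factoring the truncation $\Lambda X\to\Lambda X/\Lambda^{>N}X$ as $\Lambda X\hookrightarrow(\Lambda X\otimes\Lambda T,D)\xrightarrow{\ \simeq\ }\Lambda X/\Lambda^{>N}X$ through a relative Sullivan algebra, the inclusion $\Lambda X\hookrightarrow(\Lambda X\otimes\Lambda T,D)$ admits a CDGA retraction. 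So the task is to build such a retraction for $\Lambda Z\otimes\Lambda W$ at level $N=(m+1)(n+2)-2$, and at level $N=(m+1)(n+1)-1$ when $\Lambda Z\otimes\Lambda W$ is itself minimal, from the retraction witnessing $cat(\Lambda Z)=m$ and the retraction witnessing $cat(\Lambda W)=n$. The topological picture driving the construction is: cover $B$ by $m+1$ categorical open sets, use that the fibration is fibre-homotopy trivial over each of them, and cover each of the resulting pieces by $n+1$ categorical open sets of the fibre.

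The base half of the argument. Because $\Lambda Z$ is a sub-CDGA of $\Lambda Z\otimes\Lambda W$, for each $k$ one can form the quotient of $\Lambda Z\otimes\Lambda W$ by the differential ideal generated by the fibrewise power $\Lambda^{>k}W$ and factor the projection through a relative Sullivan algebra over $\Lambda Z$; as $k$ varies this yields a fibrewise (``$\Lambda Z$-relative'') Ganea tower, and I let $\mathrm{relcat}_{\Lambda Z}(\Lambda Z\otimes\Lambda W)$ denote the least $k$ at which $\Lambda Z\otimes\Lambda W$ admits a $\Lambda Z$-linear CDGA retraction from the $k$-th stage. The key claim here is a product-of-towers inequality
\[
cat(\Lambda Z\otimes\Lambda W)\ \le\ (m+1)\bigl(\mathrm{relcat}_{\Lambda Z}(\Lambda Z\otimes\Lambda W)+1\bigr)-1,
\]
proved by interleaving the absolute Ganea tower of $\Lambda Z$ with the $\Lambda Z$-relative tower above and invoking the retraction provided by $cat(\Lambda Z)=m$ once at each relative stage — this being the algebraic surrogate for trivialising the fibration over a categorical open set of $B$. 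I would additionally want the refinement that any relative cone-stage whose attaching data lie in word-length $0$ in the fibre variables contributes only $m$, not $m+1$, to the count; this refinement is what separates the two bounds.

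The fibre half of the argument. Reducing the $\Lambda Z$-relative tower modulo $\Lambda^{\ge1}Z$ (i.e. applying $-\otimes_{\Lambda Z}\mathbb Q$) yields a model of the ordinary Ganea tower of the fibre $(\Lambda W,\bar D)$; since $cat(\Lambda W)=n$ that tower retracts at level $n$, and one then wants to lift this retraction to a $\Lambda Z$-linear retraction of the $\Lambda Z$-relative tower, arguing by obstruction theory along the $Z$-adic filtration, whose successive quotients are $\Lambda Z$-free and carry the fixed fibrewise retraction. This gives $\mathrm{relcat}_{\Lambda Z}(\Lambda Z\otimes\Lambda W)\le n$ when $\Lambda Z\otimes\Lambda W$ is minimal, hence $cat(\Lambda Z\otimes\Lambda W)\le(m+1)(n+1)-1$. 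In general I would first replace $\Lambda Z\otimes\Lambda W$ by a quasi-isomorphic minimal relative Sullivan algebra $\Lambda Z\otimes\Lambda W'$ over $\Lambda Z$ — which leaves $cat(\Lambda Z)$ and $cat(\Lambda W)$ unchanged and makes the fibre differential decomposable — after which the only surviving obstruction to the lift is that $D$ may still send a fibre generator linearly into $\Lambda^{\ge1}Z$; killing this needs exactly one extra cone-stage, of the restricted kind isolated above, which by the refinement costs a further $m$. So $cat(\Lambda Z\otimes\Lambda W)\le[(m+1)(n+1)-1]+m=(m+1)(n+2)-2$ in general.

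The main obstacle, as I see it, is the lift in the fibre half. The ``fibre'' $(\Lambda W,\bar D)$ of a relative Sullivan algebra is not the Sullivan model of the fibre of an honest fibration when the base or fibre fails to be of finite type, so transporting its Ganea-tower retraction up to a $\Lambda Z$-linear retraction is not formal: one must track the twisting terms precisely, and in the non-minimal case isolate the single cone-stage responsible for the linear-in-$Z$ component of the differential — it is this, together with the preliminary passage to a minimal relative model, that both produces the extra summand $m$ and makes it vanish under the minimality hypothesis. The rest is bookkeeping: confirming that the interleaving of the two towers really is a relative Sullivan model of the $N$-th truncation of $\Lambda Z\otimes\Lambda W$ for precisely the stated $N$, i.e. that the fibre- and base-word-length counts compose as claimed. (An entirely parallel argument, with every retraction replaced by its effect on cohomology, should give the corresponding bound for the Toomer invariant.)
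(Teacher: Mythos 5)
Your overall architecture --- bound a fibrewise ``relative category'' over $\Lambda Z$ by $n$ and then pay a factor of $m+1$ --- has the right shape, but the two lemmas carrying all the weight are not proved, and the first of them is essentially the original question. The claim that the retraction of the fibre tower (coming from $cat(\Lambda W)=n$) lifts to a $\Lambda Z$-linear CDGA retraction of your relative tower, i.e.\ $\mathrm{relcat}_{\Lambda Z}(\Lambda Z\otimes\Lambda W)\le n$ in the minimal case, is asserted only via ``obstruction theory along the $Z$-adic filtration,'' with no reason given why the obstructions vanish; this is exactly the difficulty F\'elix--Halperin--Thomas isolate (without finite-type or connectivity hypotheses the fibre $(\Lambda W,\bar d)$ does not control the extension), and nothing in your sketch addresses it. The same applies to the product inequality $cat\le(m+1)(\mathrm{relcat}+1)-1$: the topological step it imitates --- trivializing the fibration over a categorical open set of the base --- has no counterpart at the level of CDGA retractions, and ``interleaving the two towers'' is not described concretely enough to check. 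The paper deliberately avoids CDGA retractions altogether: it passes to module category via Hess' theorem ($cat=mcat$), where maps can be lifted through surjective quasi-isomorphisms of semifree modules (Propositions \ref{surjective resolution}, \ref{commutative lift}, \ref{replace homotopy}), and the interleaving is done explicitly by the staircase of ideals $I_k=I_{k+1}+P^{\geq k,\geq(m+1-k)(n+2)-1}$ in the double wordlength filtration; the hypothesis $cat(\Lambda W)=n$ is applied only to the associated graded pieces $P^{k,\geq q}$, on which the $\Lambda^{+}Z$-action is trivial, so no $\Lambda Z$-equivariant lifting of a fibre retraction is ever required.

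There is also a structural miscount in how you separate the two bounds. Even when $\Lambda Z\otimes\Lambda W$ is minimal, the differential sends fibre generators into $\Lambda^{\geq 2}Z$, i.e.\ into fibre wordlength $0$; hence your quotients by ``the differential ideal generated by $\Lambda^{>k}W$'' involve elements of fibre wordlength $\le k$, the reduction of your tower mod $\Lambda^{\geq 1}Z$ is not simply the Ganea tower of $(\Lambda W,\bar d)$, and the obstruction to your lift is not confined to components linear in $Z$. What minimality actually buys is quantitative rather than qualitative: a drop of fibre wordlength by $1$ forces a gain of base wordlength by at least $2$ instead of $1$, which is precisely what lets the paper shorten the staircase steps from $n+2$ to $n+1$ and improve $(m+1)(n+2)-2$ to $(m+1)(n+1)-1$. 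Your arithmetic $(m+1)(n+1)-1+m=(m+1)(n+2)-2$ lands on the right numbers, but the proposed mechanism (one extra cone-stage for linear-in-$Z$ terms after passing to a relative minimal model, plus the unproved refinement that such a stage costs only $m$) is not substantiated, and the preliminary claim that passing to the relative minimal model leaves $cat(\Lambda W)$ unchanged is itself left unargued.
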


The proof pertains to the module category $mcat$, initially introduced by Halperin and Lemaire \cite{HL}, which offers an analogy to the LS category for differential graded modules (DG modules). Hess' theorem \cite{hess} shows that the LS category and the module category of a minimal Sullivan algebra are same. So Theorem \ref{estimate cat intro} is a special case of Theorem \ref{main} and Theorem \ref{main for minimal}, which are statements concerning module categories.

A minimal Sullivan algebra $\Lambda V$ can be interpreted as a relative Sullivan algebra, where its base $\Lambda V^1$ is generated by degree-1 generators, and its fiber $\Lambda V^{\geq 2}$ is generated by the generators of higher degrees. Thus, the corollary below follows from Theorem \ref{estimate cat intro}, addressing the first part of the question by F\'elix, Halperin and Thomas.

\begin{cor}\label{cat for minimal intro}
Let $\Lambda V$ be a minimal Sullivan algebra. Then $cat(\Lambda V)<\infty$ if and only if both $cat(\Lambda V^1)$ and $cat(\Lambda V^{\geq 2})$ are finite.
\end{cor}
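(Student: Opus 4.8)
The plan is to present the minimal Sullivan algebra $(\Lambda V,d)$ as a relative Sullivan algebra and feed it to Theorem~\ref{estimate cat intro}. Since $d$ is decomposable, $d(V^1)\subseteq\Lambda^2V^1$, so $(\Lambda V^1,d)$ is a sub-CDGA of $(\Lambda V,d)$; the ideal $\Lambda^+V^1\cdot\Lambda V$ is closed under $d$, and the quotient $\Lambda V/(\Lambda^+V^1\cdot\Lambda V)\cong(\Lambda V^{\geq 2},\bar d)$ is again a minimal Sullivan algebra, since the induced differential is again decomposable. Using the well-ordered basis of $V$, one sees that $\Lambda V=\Lambda V^1\otimes\Lambda V^{\geq 2}$ is a relative Sullivan algebra with base $\Lambda V^1$ and fibre $\Lambda V^{\geq 2}$, and it is itself minimal; these are the two algebras whose categories appear in the statement.

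For the implication ``$\Leftarrow$'', suppose $cat(\Lambda V^1)=m<\infty$ and $cat(\Lambda V^{\geq 2})=n<\infty$. Applying the second (minimal) part of Theorem~\ref{estimate cat intro} to the minimal relative Sullivan algebra $\Lambda V^1\otimes\Lambda V^{\geq 2}$ gives $cat(\Lambda V)\leq(m+1)(n+1)-1<\infty$ (the weaker general bound $(m+1)(n+2)-2$ would serve equally well). This is the direction that genuinely uses the new theorem, and it is immediate once the relative structure has been identified.

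For ``$\Rightarrow$'', assume $cat(\Lambda V)<\infty$; I would deduce $cat(\Lambda V^1)\leq cat(\Lambda V)$ and $cat(\Lambda V^{\geq 2})\leq cat(\Lambda V)$ from standard monotonicity properties of LS category. By Hess' theorem these become statements about the module category $mcat$, and I would try to obtain them from the behaviour of $mcat$ along the KS-extension $\Lambda V^1\to\Lambda V\to\Lambda V^{\geq 2}$: the fibre bound directly from this extension, the base bound by viewing $\Lambda V^1$ as a sub-Sullivan algebra whose inclusion has injective linear part. Both quantities are then finite, which closes the equivalence.

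The main obstacle is entirely in ``$\Rightarrow$'', and above all in the fibre bound $cat(\Lambda V^{\geq 2})\leq cat(\Lambda V)$. The difficulty is that $\Lambda V^{\geq 2}$ is neither a sub-CDGA nor even a homotopy retract of $\Lambda V$ — the differential of a higher-degree generator can have a component lying entirely in $\Lambda V^1$, and this obstructs any section of the surjection $\Lambda V\to\Lambda V^{\geq 2}$ — so one cannot simply quote monotonicity of $cat$ under retracts and must argue module-theoretically along the extension. One must also take care that the base $\Lambda V^1$ is in general far from simply connected, so the classical (simply connected) mapping theorem is not available verbatim for the base bound either. The ``$\Leftarrow$'' direction, by contrast, is a clean corollary of Theorem~\ref{estimate cat intro}.
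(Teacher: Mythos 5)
Your ``$\Leftarrow$'' direction is correct and is exactly the paper's route: identify $\Lambda V=\Lambda V^{1}\otimes\Lambda V^{\geq 2}$ as a (minimal) relative Sullivan algebra with base $\Lambda V^{1}$ and fibre $(\Lambda V^{\geq 2},\bar d)$, and apply Theorem \ref{estimate cat intro} to conclude $cat(\Lambda V)\leq (m+1)(n+1)-1<\infty$. Your verification of the relative structure (the ideal $\Lambda^{+}V^{1}\cdot\Lambda V$ is $d$-stable, the induced differential on the quotient is again decomposable, and the Sullivan filtration of $V$ intersected with $V^{\geq 2}$ gives the required $\Lambda$-extension filtration) is sound; the paper asserts this interpretation without proof.

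The genuine gap is the ``$\Rightarrow$'' direction: as written, you do not prove either inequality $cat(\Lambda V^{1})\leq cat(\Lambda V)$ or $cat(\Lambda V^{\geq 2})\leq cat(\Lambda V)$ --- you say you ``would deduce'' them from monotonicity and ``would try to obtain'' them by an $mcat$ argument along the extension, and you then correctly explain why the naive routes fail (the fibre $\Lambda V^{\geq 2}$ is not a retract of $\Lambda V$, and the base $\Lambda V^{1}$ is not simply connected, so the classical mapping theorem does not apply verbatim). Diagnosing the obstruction is not the same as overcoming it, so the forward implication remains unproved in your proposal. The paper closes this direction purely by citation: the fibre inequality $cat(\Lambda W)\leq cat(\Lambda Z\otimes\Lambda W)$ for a \emph{minimal} Sullivan algebra $\Lambda Z\otimes\Lambda W$ is Theorem 9.3 of \cite{FHT2}, and the base inequality comes from Proposition 9.6 of \cite{FHT2}; both are proved there in the non-simply-connected Sullivan setting, so the difficulties you list are already resolved in the literature. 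To complete your proof you should either quote these results of F\'elix--Halperin--Thomas or actually carry out the module-category argument you sketch; neither is done in the proposal.
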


Using the same idea, we can also give an estimation for the Toomer invariant. Theorem \ref{estimate e} is its generalization to module categories.

\begin{thm}\label{estimate e intro}
Let $\Lambda V=\Lambda Z\otimes\Lambda W$ be a relative Sullivan algebra. Suppose that $e_{\Lambda Z}(\Lambda V)=m$ and $e_{\Lambda W}(\Lambda^{\geq q} W)=n$ for all $q\geq 0$, then $e_{\Lambda V}(\Lambda V) \leq (m+1)(n+2)-2$. If $\Lambda V$ itself is a minimal Sullivan algebra, then $e_{\Lambda V}(\Lambda V) \leq (m+1)(n+1)-1$.
\end{thm}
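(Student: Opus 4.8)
The plan is to establish the module-theoretic statement (Theorem~\ref{estimate e}) and read off Theorem~\ref{estimate e intro} as the case in which the modules involved are $\Lambda V$ and the truncations $\Lambda^{\ge q}W$. Note first that this cannot simply be deduced from Theorem~\ref{estimate cat intro} via the inequalities $e\le mcat\le cat$: the hypotheses here bound only Toomer invariants of $\Lambda Z$, $\Lambda V$ and the $\Lambda^{\ge q}W$, which is strictly weaker than bounding their categories, so the argument must be carried out with $e$ directly. The upshot, however, is that it is \emph{cheaper} than the $mcat$ argument behind Theorem~\ref{main}: since $e_{\Lambda V}(\Lambda V)\le N$ is equivalent to the projection $\rho_N\colon(\Lambda V,d)\to(\Lambda V/\Lambda^{>N}V,\bar d)$ being injective on cohomology, every place where the proof of Theorem~\ref{main} constructs a module homotopy retraction is replaced by a diagram chase showing that a cohomology class annihilated by $\rho_N$ already vanishes in $H(\Lambda V)$.

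First I would introduce two compatible $d$-stable filtrations of $\Lambda V=\Lambda Z\otimes\Lambda W$: the \emph{fibre filtration} coming from the semifree $\Lambda Z$-module structure of the relative Sullivan algebra (with respect to which $e_{\Lambda Z}(\Lambda V)=m$ is computed), and the \emph{base filtration} pulled back from the word-length filtration of $\Lambda Z$ (passing to a minimal model of $\Lambda Z$ if needed). The hypothesis $e_{\Lambda W}(\Lambda^{\ge q}W)=n$ for all $q\ge 0$ says precisely that each projection $\Lambda^{\ge q}W\to\Lambda^{\ge q}W/\Lambda^{\ge q+n+1}W$ is injective on cohomology; the key lemma I would isolate is that this uniform fibrewise input propagates to the total algebra, namely that a cohomology class of $\Lambda V$ supported, in the fibre filtration, in a band of width $n$ cannot be pushed beyond that band without becoming zero. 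Combining this with $e_{\Lambda Z}(\Lambda V)=m$, which bounds the persistence of a class along the base filtration on each fibre-graded quotient, an induction along the fibre filtration shows that a class killed by $\rho_N$ is killed after traversing at most $n+1$ (resp.\ $n+2$) fibre stages, each of which consumes at most $m+1$ units of total word length in $\Lambda V$; this yields $e_{\Lambda V}(\Lambda V)\le(m+1)(n+1)-1$ when $\Lambda V$ is minimal, and $e_{\Lambda V}(\Lambda V)\le(m+1)(n+2)-2$ in general once one additional pass through the base filtration, forced by non-minimality, is accounted for.

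The hard part will be the bookkeeping between the two filtrations. The naive $W$-word-length filtration of $\Lambda Z\otimes\Lambda W$ is not $d$-stable once $\Lambda V$ is non-minimal — the differential of a generator of $W$ may carry components of strictly smaller $W$-length lying in $\Lambda Z$ — so one must work with a $d$-stable refinement that at once computes $e_{\Lambda Z}(\Lambda V)$ and stays coarse enough to be compared with the total word-length filtration defining $e_{\Lambda V}(\Lambda V)$; quantifying the gap between this refinement and the honest $W$-length is exactly what produces the extra summand $m$ in the general estimate, and the fact that no refinement is needed in the minimal case (there the total degree forces $d$ to raise word length strictly) is what sharpens the bound to $(m+1)(n+1)-1$. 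A secondary point is the passage from the fibre hypothesis on the $\Lambda W$-modules $\Lambda^{\ge q}W$ to the corresponding statement for $\Lambda Z\otimes\Lambda^{\ge q}W$ over $\Lambda Z$: since no finite-type hypothesis is available on $\Lambda Z$ or $\Lambda W$, one should transfer it by a direct comparison of the filtrations and their cohomology spectral sequences rather than through a Künneth isomorphism.
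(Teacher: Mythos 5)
Your overall shape is the paper's: prove a module-level statement and specialize to $P=\Lambda V$, working directly with injectivity of projections on cohomology rather than through $mcat$, and your observation that the result cannot be read off from the category bound is correct. But the proposal stops exactly where the proof has to start. Your ``key lemma'' --- that a class of $H(\Lambda V)$ supported in a band of fibre word length $n$ cannot be pushed beyond the band without dying --- is the entire difficulty, and as stated it is not even well-formed: the $W$-word-length filtration of $\Lambda V$ is not $d$-stable (as you yourself note), and the hypothesis $e_{\Lambda W}(\Lambda^{\geq q}W)=n$ concerns only the fibre differential $\bar d$, so it can be brought to bear inside $\Lambda V$ only after passing to subquotients on which the total differential reduces to $\bar d$. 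The paper manufactures exactly such subquotients with staircase-shaped $d$-stable ideals: $I_{m+1}=\Lambda^{\geq m+1}Z\otimes\Lambda W$ and $I_k=I_{k+1}+\Lambda^{\geq k}Z\otimes\Lambda^{\geq(m+1-k)(n+2)-1}W$; then $\bigl(\Lambda^{\geq k}Z\otimes\Lambda^{\geq(m-k)(n+2)}W+I_{k+1}\bigr)/I_{k+1}\cong(\Lambda^{k}Z,0)\otimes(\Lambda^{\geq(m-k)(n+2)}W,\bar d)$ because everything of $Z$-length $>k$ has been killed, the fibre hypothesis applies to this piece, and injectivity on cohomology is transferred to $P/I_{k+1}\to P/I_k$ by the four lemma applied to a map of two short exact sequences whose third terms agree. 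Nothing in your sketch supplies a substitute for this transfer mechanism; ``a diagram chase showing the class already vanishes'' is precisely what has to be constructed.

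There is also a structural mismatch: your induction runs in the transposed direction. You propose $n+1$ (resp.\ $n+2$) fibre stages, each ``consuming at most $m+1$ units'' because $e_{\Lambda Z}(\Lambda V)=m$ ``bounds the persistence of a class along the base filtration on each fibre-graded quotient''; but that hypothesis is a single statement about $\Lambda V$ as a $\Lambda Z$-module and gives no control over fibre-graded subquotients, so the step you repeat $n+2$ times is unsupported. The asymmetry of the hypotheses --- the fibre bound is assumed uniformly in $q$, the base bound only once --- forces the opposite organization, which is what the paper does: use $e_{\Lambda Z}(\Lambda V)=m$ exactly once (injectivity of $\Lambda V\to\Lambda V/I_{m+1}$), then induct over $Z$-word length $k=m,\dots,0$, invoking the fibre bound at each stage; since $\Lambda^{\geq(m+1)(n+2)-1}V\subset I_0$, the composite factors through $\Lambda V/\Lambda^{>(m+1)(n+2)-2}V$, and the enlarged staircase available when $\Lambda V$ is minimal (dropping $W$-length then forces raising $Z$-length by at least two) gives $(m+1)(n+1)-1$. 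Two smaller points: no minimal model of $\Lambda Z$ is needed, since the invariants are defined over any connected KS complex; and the K\"unneth worry is moot, because the fibre modules actually used are $\Lambda^{p}Z\otimes\Lambda^{\geq q}W$ with zero differential on the first factor, whose cohomology over a field is $\Lambda^{p}Z\otimes H(\Lambda^{\geq q}W)$ with no finite-type assumption.
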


This paper is organized as follows. In Section 2, we review the basic definitions and former results. Trying to make this paper self-contained, we provide proofs of certain consequences that are not covered in \cite{FHT} and \cite{FHT2}. The fact that DG modules form a model category is not directly used, although some statements follow from it immediately. In Section 3, we prove the main Theorems: Theorem \ref{main}, Theorem \ref{main for minimal} and Theorem \ref{estimate e}, which lead to the conclusions outlined in Theorem \ref{estimate cat intro}, Corollary \ref{cat for minimal intro} and Theorem \ref{estimate e intro}.

The author would like to thank Ruizhi Huang for helpful discussions. This research is supported by the National Key Research and Development Program of China No. 2020YFA0713000.

\section{Preliminary}
\subsection{Notations and Conventions}
In this paper, Notations and conventions generally follow from \cite{Halperin}, \cite{FHT} and \cite{FHT2}.

As in the introduction, we will abbreviate commutative differential graded algebra as CDGA, differential graded modules as DG modules, and Lusternik-Schnirelmann category as LS category.

The ground ring is assumed to be a field $\mathbbm{k}$ of characteristic 0. CDGAs are assumed to be non-negatively graded, while DG modules is allowed to be $\mathbb{Z}$-graded. When referring to a morphism of DG modules over a CDGA $A$, we may simply use the terms $A$-module morphism or morphism. If the map is only a morphism of graded modules, we will specify this.

We will use $\cong$ to denote isomorphisms, $\simeq$ to denote quasi-isomorphisms, and $\sim$ to denote homotopic DG module morphisms. In this section, we will also provide the definition of the latter.

Let $(\Lambda V,d)$ be a CDGA which is also a free graded algebra generated by a graded vector space $V$ of non-negative degree. We call it a \textbf{KS complex}, short for Koszul-Sullivan complex, if it satisfies the Sullivan condition, i.e. $V$ is the union of an increasing family of subspaces
$$
0=V(-1) \subset V(0) \subset V(1) \subset \ldots \subset V(k) \subset \ldots
$$
such that $dV(k)\subset\Lambda V(k-1)$. We will simply write $(\Lambda V,d)$ as $\Lambda V$ when there is no ambiguity about the differential.

For $v\in V$ with homogeneous degree, we denote its degree as $|v|$. The subspace of all $v\in V$ with $|v|=n$ will be written as $V^n$. We also use $\Lambda^m V$ to represent the subspace of $\Lambda V$ consisting of elements with wordlength $m$, and use $\Lambda V^n$ to denote the graded algebra generated by $V^n$. Note that the latter is distinct from the subspace of $\Lambda V$ concentrated in degree $n$, which is usually written as $(\Lambda V)^n$. For the subspace of $V$ spanned by elements with degree smaller than $n$, we use $V^{<n}$. Similar notations, such as $\Lambda^{\leq m}V,\Lambda V^{\geq n}$ and so on, will also be employed.

When $V^0$ is non-trivial, we assume that there is an augmentation $\epsilon:V^0\to\mathbbm{k}$. This augmentation, which is necessary for Theorem 2.2 of \cite{Halperin}, is only used to prove Theorem \ref{mcat over non-minimal}.

We mainly focus on the case that $V=V^{\geq 1}$. A KS complex generated by such $V$ is referred to as a \textbf{Sullivan algebra}.

A KS complex $\Lambda V$ is termed \textbf{minimal} if it also satisfies $dV^n \subset \Lambda V^{\leq n}$. If $\Lambda V$ is a Sullivan algebra, this is equivalent to $dV\subset\Lambda^{\geq 2}V$. It is a well-known result that given any connected CDGA $A$ (i.e. $H^0(A)=\mathbbm{k}$), there exists a quasi-isomorphism from a Sullivan algebra $\Lambda V$ to $A$, which is called the \textbf{Sullivan model} of $A$. Furthermore, $\Lambda V$ can be chosen to be minimal up to isomorphism, and in this case, it is called the \textbf{minimal Sullivan model} or the \textbf{minimal model} of $A$.

Let $B$ be a connected CDGA. The inclusion $B\to B\otimes\Lambda W$ is termed a \textbf{$\Lambda$-extension} if it is a CDGA morphism satisfying the following condition. $W$ is the union of an increasing family of subspaces
$$
0=W(-1) \subset W(0) \subset W(1) \subset \ldots \subset W(k) \subset \ldots
$$
such that $d(1\otimes W(k)) \subset (\mathbbm{k}\oplus B^{\geq 1})\otimes\Lambda W(k-1)$. This $\Lambda$-extension induces a KS complex $(\Lambda W,\bar{d})$ where $\bar{d}$ is the $\mathbbm{k}\otimes\Lambda W$ component of $d$. The KS complex $(\Lambda W,\bar{d})$ is referred to as the \textbf{fiber} of the $\Lambda$-extension, and the CDGA $B$ is called the \textbf{base}.

If in addition $d(1\otimes W^n) \subset B\otimes W^{\leq n}$, the $\Lambda$-extension is referred to as \textbf{minimal}.

When $W$ is concentrated in positive degrees, the $\Lambda$-extension is called a \textbf{Sullivan extension}, and the CDGA $B\otimes\Lambda W$ is termed a \textbf{relative Sullivan algebra}.

\subsection{Semifree Resolutions}
\begin{defn}
Let $P$ be a DG module over a CDGA $A$. We call $P$ \textbf{semifree} if it can be written as the union of an increasing sequence
$$
0=P(-1) \subset P(0) \subset P(1) \subset \ldots \subset P(k) \subset \ldots
$$
of sub $A$-modules, such that $P(k)/P(k-1)\simeq A\otimes W(k)$ with $dW(k)=0$, i.e. $P(k)/P(k-1)$ is a free DG module over $A$.
\end{defn}

As the ground ring is assumed to be a field, the exact sequence
$$
0 \to P(k-1) \to P(k) \to P(k)/P(k-1) \to 0
$$
splits. So by induction $P$ can be written as $A\otimes V$ as a graded module, where $V=\bigcup_k V(k)$ for an increasing sequence of vector spaces $V(k)$ such that $dV(k)\subset A\otimes V(k-1)$.

\begin{defn}
Let $M$ be a DG module over a CDGA $A$. An \textbf{$A$-semifree resolution} is a semifree $A$-module $P$ together with a quasi-isomorphism $P\stackrel{\simeq}{\longrightarrow} M$.
\end{defn}

According to Proposition 6.6 of \cite{FHT}, every DG module has a semifree resolution. Actually, the quasi-isomorphic of the semifree resolution can be made surjective.

\begin{prop}\label{surjective resolution}
Let $A$ be a CDGA. Every $A$-module $M$ has a semifree resolution $f:P\stackrel{\simeq}{\longrightarrow} M$ such that $f$ is surjective.
\end{prop}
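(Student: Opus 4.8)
The plan is to start from an arbitrary semifree resolution $g : Q \xrightarrow{\simeq} M$, which exists by Proposition 6.6 of \cite{FHT}, and then to enlarge it to a surjective one by adding a free ``acyclic'' piece that covers what is missing. Concretely, I would choose a graded vector space $U$ together with a surjective graded-linear map $U \to M$ (for instance $U = M$ with the identity, or more economically a basis of $M$ as a graded vector space), and form the free DG module $A \otimes (U \oplus dU)$, where $dU$ is a copy of $U$ shifted in degree by $+1$, with differential sending $u \mapsto du$ and $du \mapsto 0$. This module is acyclic, being a direct sum of copies of the mapping cone of the identity on $A$ (up to suspension), hence the obvious map $A\otimes(U\oplus dU)\to 0$ is a quasi-isomorphism; and it is visibly semifree.

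Next I would set $P = Q \oplus (A \otimes (U \oplus dU))$, which is again semifree (concatenate the two filtrations), and define $f : P \to M$ by $f = g$ on $Q$ and by the chosen surjection $U \to M$ on the $U$-summand, extended $A$-linearly, and by $0$ on the $dU$-summand. To make $f$ a chain map I must be slightly careful: sending $du \mapsto 0$ in $M$ is compatible with $d$ only if $M$ has zero differential in the relevant degrees, which is not generally true. So instead I would choose, for each basis element $u$ of $U$ mapping to $m_u \in M$, the differential on the free module to be $du \mapsto u$ with $f(u) = m_u$ and $f(du)$ defined to be $dm_u \in M$; then $f(d(du)) = f(u) = m_u$ must equal $d(f(du)) = d(dm_u) = 0$, which again fails. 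The clean fix is the standard one: take the acyclic summand to be $A \otimes (U \oplus U')$ where $U' \cong U[{-}1]$, with $d u' = u$, and map $f(u') = $ a chosen element $n_u \in M$ with $dn_u = m_u$ whenever $m_u$ is a boundary — but $m_u$ need not be a boundary.

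The correct and simplest approach, which I would actually carry out, is therefore to add an acyclic module that surjects onto $M$ as graded modules via its \emph{cycles}. Pick a graded vector space $U$ with a surjection $\pi : U \twoheadrightarrow M$ of graded vector spaces; form $A \otimes (U \oplus U')$ with $U' = s^{-1}U$ and $d(1\otimes u') = 1 \otimes u$, $d(1 \otimes u) = 0$; define $f(1\otimes u) = \pi(u)$ and $f(1 \otimes u') = 0$. Now $f$ is a chain map provided $\pi(u)$ is a cycle in $M$ for every $u$ — which need not hold either. To repair this without assuming anything about $M$, replace $M$ momentarily by noting it suffices to surject onto a set of module generators that \emph{are} cycles: but $M$ need not be generated by cycles. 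At this point the honest route is: take $P' \to 0$ with $P'$ semifree acyclic and large, and observe that $Q \oplus P' \to M$ (via $g$ and $0$) is a quasi-isomorphism but still not surjective. So the genuine content is to handle the non-cycle part, and the right tool is the mapping-cylinder construction: replace $Q$ by the algebraic mapping cylinder $\mathrm{Cyl}(g)$ of $g$, which is semifree (it is $Q \oplus (Q\otimes something)$ built from $Q$ and $M$... ) — this is circular since $M$ is not semifree.

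Thus the real plan, and the main obstacle, is exactly this: produce a \emph{surjective} semifree resolution. I would do it by a direct transfinite/inductive construction mimicking Proposition 6.6 of \cite{FHT} but starting the induction by first surjecting onto all of $M$ in degree-wise fashion. Take $V(0)$ to be a graded vector space with a chosen surjection $V(0) \twoheadrightarrow M$ of graded $A$-modules, i.e. $V(0) = $ a free module $A \otimes B_0$ with $B_0$ a graded basis of $M$, and $f_0 : A \otimes B_0 \to M$ the induced surjection; this is a surjection but not a quasi-isomorphism. Then run the standard resolution process \emph{relative to $f_0$}: at each stage adjoin free generators in $A\otimes V(k)$ to kill the kernel of $H(f)$ and to hit new cycles, exactly as in the proof that semifree resolutions exist, never needing to destroy surjectivity since new generators only add to the image. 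The only subtlety — and the step I expect to be the main work — is checking that this enlarged process still terminates in a semifree module, i.e. that the filtration $\{V(k)\}$ satisfies $dV(k) \subset A \otimes V(k-1)$ and that $H(f)$ becomes an isomorphism in the limit; both follow by the same degree-wise bookkeeping as in \cite[Prop.~6.6]{FHT}, with the base case simply replaced by the surjection $A \otimes B_0 \twoheadrightarrow M$ rather than by a choice hitting only the cohomology. I would then conclude that $f : P = A \otimes V \to M$ is a surjective quasi-isomorphism from a semifree module, as required.
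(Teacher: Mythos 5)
There is a genuine gap, and it is exactly the obstacle you kept circling in the first half of your proposal without ever resolving. In your final plan you start the induction with $V(0)=B_0$, a graded basis of $M$, and the induced surjection $f_0:A\otimes B_0\twoheadrightarrow M$. But semifreeness forces $dV(0)\subset A\otimes V(-1)=0$, i.e.\ the stage-$0$ generators must be cocycles of $P$; then $f_0$ is a chain map only if $d(f_0(1\otimes b))=f_0(d(1\otimes b))=0$ for every $b\in B_0$, i.e.\ only if every element of $M$ is a cocycle --- precisely the failure you ran into with the acyclic-summand and mapping-cylinder attempts. If instead you let the generators of $B_0$ carry nonzero differentials, you must place them at higher filtration stages and define $db$ as an element of the earlier stages with $f(db)=d(f(b))$, which requires already knowing that the cocycle $d(f(b))\in M$ is hit by a cocycle of the previously built module; supplying that datum is the actual content of the proof, not ``the same degree-wise bookkeeping.'' So the step you defer as routine is the missing idea, and as written the base case of your induction is not a chain map.

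For contrast, the paper needs no new construction at all: in the resolution of Proposition 6.6 of \cite{FHT}, the bottom stage $P(0)=A\otimes V(0)$ is built on $V(0)=$ the space of \emph{all} cocycles of $M$, mapped by inclusion. Given any $x\in M$, the cocycle $dx$ lies in $V(0)\subset\im f$, say $dx=f(\alpha)$ with $\alpha$ a cocycle of $P$; since $f$ is injective on cohomology and $dx$ is exact, $\alpha=d\beta$, so $x-f(\beta)$ is a cocycle of $M$, hence lies in $V(0)\subset\im f$, and therefore $x\in\im f$. A workable repair of your route in the same spirit would be: keep the cocycles of $M$ as the stage-$0$ generators (as in \cite{FHT}), and at stage $1$ adjoin a generator $u_x$ for each $x\in M$ with $du_x$ equal to the stage-$0$ cocycle mapping to $dx$ and $f(u_x)=x$; this preserves the chain-map property, makes $f$ surjective, and the rest of the FHT process then corrects $H(f)$. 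But some such device at the base is indispensable, and your proposal does not provide it.
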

\begin{proof}
Proposition 6.6 of \cite{FHT} gives a semifree resolution $f:P\stackrel{\simeq}{\longrightarrow} M$, and $P(0)$ is identified as $A\otimes V(0)$ where $V(0)$ is the space of cocycles in $M$. So for each $x\in M$, $dx\in V(0)$ and there exists a cocycle in $\alpha\in P$ such that $f(\alpha)=dx$. As $f$ is a quasi-isomorphism, $\alpha=d\beta$ for some $\beta\in P$. Then $d(x-f(\beta))=dx-f(d\beta)=0$. Hence, $x-f(\beta)\in V(0)\subset \im f$. It follows that $x\in\im f$.
\end{proof}

\begin{defn}
Two DG module morphisms $f$ and $g$ from $M$ to $N$ over a CDGA $A$ are called \textbf{homotopic}, if $f-g=d\theta+\theta d$ for some graded module morphism $\theta:M\to N$ over $A$. We will write $f\sim g$.
\end{defn}

\begin{prop}\label{commutative lift}
Suppose $A$ is a CDGA, and $P$ is a semifree $A$-module. Let $\phi: P\to N$ and $f:M\stackrel{\simeq}{\longrightarrow} N$ be $A$-module morphisms with $f$ being quasi-isomorphic.

(i) (Proposition 6.4(ii) of \cite{FHT}) There is a unique homotopy class of morphisms $\psi:P\to M$ such that $\phi \sim f\circ\psi$.

(ii) (Exercise 4 of Section 6 of \cite{FHT}) If $f$ is surjective, we can choose $\psi$ such that $\phi=f\circ\psi$. Such $\psi$ is called a \textbf{lift} of $\phi$ through $f$.
$$
\xymatrix{
& M \ar[d]_f^{\simeq} \\
P \ar[r]^{\phi} \ar@{-->}[ru]^{\psi} & N
}
$$
\end{prop}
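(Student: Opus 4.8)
The plan is to prove both statements by induction along the semifree filtration. Write $P=A\otimes V$ with $V=\bigcup_k V(k)$ and $dV(k)\subseteq A\otimes V(k-1)$, where (as observed after the definition of semifree modules) each $P(k)=A\otimes V(k)$ splits off $P(k-1)$ as graded $A$-modules. Thus it suffices to define the lift on the generators $V(k)$ one filtration stage at a time and extend $A$-linearly, checking at each stage that the result is still a chain map and still sits over $\phi$ (exactly, in (ii); up to a prescribed homotopy, in (i)).

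I would establish (ii) first, since surjectivity makes the induction mechanical and it already exhibits the main idea. Since $\mathbbm{k}$ is a field, $0\to\ker f\to M\xrightarrow{f}N\to 0$ is a short exact sequence of complexes, so the long exact homology sequence together with $H(f)$ being an isomorphism forces $H(\ker f)=0$; that is, $\ker f$ is acyclic. For $v\in V(0)$ (so $dv=0$), pick $m\in M$ with $f(m)=\phi(v)$; then $dm\in\ker f$ is a cocycle, hence $dm=d\xi$ for some $\xi\in\ker f$, and $\psi(v):=m-\xi$ is a cocycle with $f\psi(v)=\phi(v)$. Inductively, for $v\in V(k)$ the element $\psi(dv)\in P(k-1)$ is already defined, is a cocycle because $\psi$ is a chain map on $P(k-1)$ and $d(dv)=0$, and satisfies $f\psi(dv)=\phi(dv)=d\phi(v)$; picking $m\in M$ with $f(m)=\phi(v)$, the difference $dm-\psi(dv)\in\ker f$ is a cocycle, so equals $d\xi$ with $\xi\in\ker f$, and $\psi(v):=m-\xi$ satisfies $d\psi(v)=\psi(dv)$ and $f\psi(v)=\phi(v)$. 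Extending $A$-linearly over each stage produces $\psi$ with $\phi=f\circ\psi$.

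For (i) with a general quasi-isomorphism $f$, I would run the same induction but only up to homotopy: construct simultaneously $\psi\colon P\to M$ and an $A$-linear degree $-1$ map $\theta\colon P\to N$ with $f\psi-\phi=d\theta+\theta d$. At stage $k\ge 1$, for $v\in V(k)$ the element $z:=\psi(dv)$ is a cocycle of $M$ with $f(z)=f\psi(dv)=\phi(dv)+d\theta(dv)=d\bigl(\phi(v)+\theta(dv)\bigr)$ exact, so injectivity of $H(f)$ gives $z=dm$ for some $m\in M$; then $w:=f(m)-\phi(v)-\theta(dv)$ is a cocycle of $N$, and after replacing $m$ by $m$ plus a suitable cocycle of $M$ — possible because $H(f)$ is surjective — we may assume $w=dn$, whereupon $\psi(v):=m$ and $\theta(v):=n$ work. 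The base stage $V(0)$ is the special case $dv=0$, where $\phi(v)$ is already a cocycle and surjectivity of $H(f)$ directly supplies a cocycle lift together with its homotopy. For uniqueness, if $f\psi\sim\phi\sim f\psi'$ then $f(\psi-\psi')\sim 0$, and the same machinery applied to the difference produces an $A$-linear degree $-1$ map $H\colon P\to M$ with $\psi-\psi'=dH+Hd$. (One can instead deduce (i) from (ii) by replacing $f$ with the projection onto $N$ of the algebraic mapping cylinder of $f$, a surjective quasi-isomorphism that receives $M$ through a chain homotopy equivalence, and then transporting the lift back along that equivalence; I find the direct argument more economical for keeping the section self-contained.)

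The routine points I would not belabor are that $d\theta+\theta d$ (resp.\ $dH+Hd$) is automatically $A$-linear when $\theta$ (resp.\ $H$) is, and the sign conventions for degree $-1$ maps. The genuine subtlety — and where I expect to spend the most care — is the inductive step of (i): in contrast with (ii), the obvious preimage of a generator need not be compatible with the homotopy $\theta$ built so far, so at every stage one must correct the chosen preimage by a cocycle of $M$, and it is exactly the surjectivity of $H(f)$, not merely its injectivity, that makes this correction available. Carrying this correction consistently through the parallel induction used for uniqueness is the main obstacle.
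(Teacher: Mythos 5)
Your argument is correct, and it is worth separating the two halves. For (ii) you follow the same induction over the semifree filtration that the paper uses, but you are more careful at the key step: after producing an element whose differential is $\psi(dv)$ one must also arrange that its image under $f$ is exactly $\phi(v)$, which you secure by first taking an arbitrary preimage under the surjection $f$ and then correcting it inside the acyclic complex $\ker f$; the paper's own write-up only chooses $m_{\alpha}$ with $dm_{\alpha}=\psi(dv_{\alpha})$ and never addresses $f(m_{\alpha})=\phi(v_{\alpha})$, so your version actually fills that gap. (A small aside: exactness of $0\to\ker f\to M\to N\to 0$ needs only surjectivity of $f$, not that $\mathbbm{k}$ is a field.) For (i) the paper gives no proof at all, deferring to Proposition 6.4(ii) of F\'elix--Halperin--Thomas, whose argument goes through the fact that $\mathrm{Hom}_A(P,-)$ preserves quasi-isomorphisms when $P$ is semifree; you instead build $\psi$ and the homotopy $\theta$ simultaneously by induction on the filtration, using injectivity of $H(f)$ to trivialize the obstruction cocycle in $M$ and surjectivity of $H(f)$ to adjust the chosen preimage by a cocycle so the homotopy extends. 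That is a legitimate, self-contained alternative, and it keeps the whole proposition elementary. The one place your sketch is thinner than the final write-up will need to be is uniqueness: ``the same machinery'' there means constructing the degree $-1$ map $H$ together with a degree $-2$ correction $\tau:P\to N$ comparing $fH$ with the given homotopy witnessing $f(\psi-\psi')\sim 0$, since otherwise the inductive obstruction $(\psi-\psi')(v)-H(dv)$ need not map to a coboundary of $N$; you flag exactly this as the main obstacle, and it does go through by repeating your existence step one degree lower.
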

\begin{proof}
(i) The proof is provided in \cite{FHT}.

(ii) Write $P$ as the union of $P(k)=A\otimes V(k)$ with $dV(k)\subset P(k-1)$. Suppose we have constructed $\psi$ on $P(k-1)$ such that $\psi\circ f=\phi$ when restricted to $P(k-1)$, we will extend it to $P(k)$.

Choose a linearly independent set $\{v_{\alpha}\}$ such that $V(k)$ is the direct sum of $V(k-1)$ and the space spanned by $\{v_{\alpha}\}$. By hypothesis $\psi(dv_{\alpha})$ is defined as a cocycle in $M$, and satisfies
$$
f(\psi(dv_{\alpha})) = \phi(dv_{\alpha}) = d(\phi(v_{\alpha})).
$$
As $f$ is a quasi-isomorphism, $\psi(dv_{\alpha})$ is a coboundary in $M$. Choose some $m_{\alpha}\in M$ such that $dm_{\alpha}=\psi(dv_{\alpha})$. Set $\psi(v_{\alpha})=m_{\alpha}$. Then $\psi$ can be extended to $P(k)$ as an $A$-module morphism.

By induction, we obtain a morphism $\psi$ defined on $P$ satisfying $\phi=f\circ\psi$.
\end{proof}

\subsection{Lusternik-Schnirelmann category}

\begin{defn}\label{def of cat}
Let $\Lambda V$ be a minimal Sullivan algebra. Its \textbf{LS category} $cat(\Lambda V)$ is the least integer $m$ such that there is a minimal relative $\Lambda$-extension $f: \Lambda V \to \Lambda V\otimes\Lambda W$, together with CDGA morphisms $g:\Lambda V\otimes\Lambda W\to\Lambda V$ and $\phi:\Lambda V\otimes\Lambda W\to\Lambda V/\Lambda^{>m} V$ making the following diagram commutative.
$$
\xymatrix{
\Lambda V \ar[rd]^f \ar[rdd]_q \ar[rr]^{id_{\Lambda V}} & & \Lambda V\\
& \Lambda V\otimes\Lambda W \ar[ur]^g \ar[d]_{\phi}^{\simeq} \\
& \Lambda V/\Lambda^{>m} V
}
$$
Here $\phi$ needs to be quasi-isomorphic, and $q$ is the natural projection.
\end{defn}

\begin{rmk}
As mentioned in \cite{FHT2}, the condition $gf=id_{\Lambda V}$ can be weakened as that $gf$ and $id_{\Lambda V}$ are homotopic as CDGA morphisms from Sullivan algebras.
\end{rmk}

Every connected CDGA has a unique minimal Sullivan model up to isomorphism. So its LS category can be defined by the minimal Sullivan model.

\begin{defn}
Let $A$ be a connected CDGA and $\Lambda V$ be its minimal Sullivan model. Then $cat(A)$ is defined as $cat(\Lambda V)$.
\end{defn}

\begin{defn}
Let $\Lambda V$ be a minimal Sullivan algebra and $P$ be a $\Lambda V$-semifree module. The \textbf{module category} of $P$, $mcat_{\Lambda V}(P)$, is the least number $m$ such that there is a semifree $\Lambda V$-module $Q$ together with DG module morphisms $f:P\to Q$, $g:Q\to P$, and $\phi:Q\to P/(\Lambda^{>m} V\cdot P)$ satisfying the following conditions. $\phi$ is quasi-isomorphic, $g\circ f \sim id_{P}$, and $\phi\circ f \sim q$ where $q:P \to P/(\Lambda^{>m} V\cdot P)$ is the natural projection. In other words, the following diagram is homotopy commutative.
\begin{align}\label{mcat graph}
\xymatrix{
P \ar[rd]^f \ar[rdd]_q \ar[rr]^{id_P} & & P\\
& Q \ar[ur]^g \ar[d]_{\phi}^{\simeq} \\
& P/(\Lambda^{>m} V\cdot P)
}
\end{align}
We will simply write $mcat_{\Lambda V}(P)$ as $mcat(P)$ when there is no ambiguity.
\end{defn}

As shown in the appendix of \cite{HL}, the homotopy commutative diagram above can be made commutative. This also follows from Proposition \ref{replace homotopy} below, which will be frequently used to prove our main Theorem.

The advantage of the definition by homotopy commutative is that $P$ can be replaced by any quasi-isomorphic $\Lambda V$-semifree modules. Applying Proposition \ref{commutative lift}, it is easy to see that all the quasi-isomorphic semifree modules have the same module category. Thus, we can define the module category on general DG modules by their semifree resolutions.

\begin{defn}
Let $\Lambda V$ be a minimal Sullivan algebra, $M$ be a $\Lambda V$-module and $P$ be a $\Lambda V$-semifree resolution of $M$. Then $mcat_{\Lambda V}(M)$ is defined as $mcat_{\Lambda V}(P)$.
\end{defn}

\begin{defn}\label{def of e}
Let $\Lambda V$ be a minimal Sullivan algebra and $P$ be a $\Lambda V$-semifree module. The \textbf{Toomer invariant}, $e_{\Lambda V}(P)$ or simply written as $e(P)$ when there is no ambiguity, is the least integer $m$ such that the projection $P\to P/(\Lambda^{>m} V\cdot P)$ is injective on cohomology.

In particular, $e(\Lambda V)$ is the least integer $m$ such that the morphism $H(\Lambda V)\to H(\Lambda V/\Lambda^{>m} V)$ induced by the projection is injective.

For a general $\Lambda V$-module $M$, $e(M)$ is also defined as the Toomer invariant of its semifree resolution.
\end{defn}

\begin{prop}[Proposition 9.3 of \cite{FHT2}]
Let $\Lambda V$ be a minimal Sullivan algebra and $M$ be a $\Lambda V$-module. Then $e(M)\leq mcat(M) \leq mcat(\Lambda V)$.
\end{prop}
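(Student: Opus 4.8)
$e(M)\leq mcat(M)\leq mcat(\Lambda V)$.

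The plan is to handle the two inequalities separately, each by unwinding the relevant definition and producing the required witness.

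For the first inequality $e(M)\leq mcat(M)$, I would start from a semifree resolution $P$ of $M$ and let $m=mcat(P)$. By definition there is a semifree module $Q$ with morphisms $f\colon P\to Q$, $g\colon Q\to P$, $\phi\colon Q\to P/(\Lambda^{>m}V\cdot P)$ such that $gf\sim id_P$, $\phi$ is a quasi-isomorphism, and $\phi f\sim q$, where $q\colon P\to P/(\Lambda^{>m}V\cdot P)$ is the projection. Passing to cohomology, $H(\phi f)=H(q)$, and since $H(\phi)$ is an isomorphism we get $H(q)=H(\phi)\circ H(f)$. If $H(q)(z)=0$ for a cohomology class $z$, then $H(\phi)(H(f)(z))=0$, hence $H(f)(z)=0$; but $H(g)H(f)=H(gf)=id$, so $z=H(g)(H(f)(z))=0$. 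Thus $H(q)$ is injective, which means $e(P)\leq m=mcat(P)$, i.e. $e(M)\leq mcat(M)$.

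For the second inequality $mcat(M)\leq mcat(\Lambda V)$, set $m=mcat(\Lambda V)=mcat_{\Lambda V}(\Lambda V)$, taking $\Lambda V$ itself as the (semifree) module over itself. Let $P$ be a semifree resolution of $M$. The idea is to tensor the $mcat$-diagram for $\Lambda V$ with $P$ over $\Lambda V$: from $f'\colon \Lambda V\to Q'$, $g'\colon Q'\to\Lambda V$, $\phi'\colon Q'\to\Lambda V/\Lambda^{>m}V$ one forms $Q:=Q'\otimes_{\Lambda V}P$ with $f:=f'\otimes id_P$, $g:=g'\otimes id_P$, and $\phi:=\phi'\otimes id_P$. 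Since $P$ is semifree (a colimit of free modules, each $\Lambda V\otimes W(k)$ with zero differential on $W(k)$), tensoring with $P$ over $\Lambda V$ preserves semifreeness of $Q'$, preserves quasi-isomorphisms (here one uses that $P$ is semifree so $-\otimes_{\Lambda V}P$ is exact up to quasi-isomorphism — equivalently one filters by the $P(k)$ and uses the field hypothesis), and preserves the homotopy relations since a homotopy $\theta$ for $\Lambda V$ yields $\theta\otimes id_P$ for $P$. One also checks the natural identification $(\Lambda V/\Lambda^{>m}V)\otimes_{\Lambda V}P\cong P/(\Lambda^{>m}V\cdot P)$, under which $\phi'\otimes id_P$ becomes the required map $Q\to P/(\Lambda^{>m}V\cdot P)$ and $q'\otimes id_P$ becomes the projection $q\colon P\to P/(\Lambda^{>m}V\cdot P)$. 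Hence the tensored diagram is homotopy commutative, exhibiting $mcat(P)\leq m$.

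The main obstacle is the second inequality, specifically verifying that $-\otimes_{\Lambda V}P$ sends a quasi-isomorphism between semifree modules to a quasi-isomorphism: this is where semifreeness of $P$ is essential, and it is proved by writing $P=\bigcup_k P(k)$ with $P(k)/P(k-1)$ free, applying the five-lemma along the induced filtration of the tensor products, and invoking that over a field the short exact sequences split so that tensoring with the free pieces $\Lambda V\otimes W(k)$ is exact. The homotopy-commutativity bookkeeping ($\theta\mapsto\theta\otimes id_P$) and the identification of the quotient module are routine once this point is settled. Alternatively, one can bypass tensoring altogether by first using Proposition \ref{commutative lift} to lift the structure maps through a surjective semifree resolution, but the tensor-product argument is the most transparent.
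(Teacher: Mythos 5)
Your proof is correct. Note that the paper offers no argument of its own for this statement: it is quoted directly from \cite{FHT2} (Proposition 9.3), so there is nothing in the text to compare against, and your two-step argument is exactly the standard one. The first inequality is handled correctly at the level of cohomology (homotopy-commutativity of the $mcat$ diagram forces $H(q)=H(\phi)H(f)$ and $H(g)H(f)=id$, hence injectivity of $H(q)$), and the second by applying $-\otimes_{\Lambda V}P$ to the diagram witnessing $mcat_{\Lambda V}(\Lambda V)=m$, where $P$ is a semifree resolution of $M$. The points you flag as needing care are indeed the only ones: $Q'\otimes_{\Lambda V}P$ is again semifree, $-\otimes_{\Lambda V}P$ preserves quasi-isomorphisms because $P$ is semifree (this is the filtration/five-lemma argument, Proposition 6.7 of \cite{FHT}), the identification $(\Lambda V/\Lambda^{>m}V)\otimes_{\Lambda V}P\cong P/(\Lambda^{>m}V\cdot P)$, and the transport of homotopies by $\theta\otimes id_P$; all are routine and consistent with the semifree machinery recalled in Section 2, so the proposal stands as a complete proof.
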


\begin{thm}[Hess, \cite{hess}; Theorem 9.4 of \cite{FHT2}]
Let $\Lambda V$ be a minimal Sullivan algebra. Then $cat(\Lambda V)=mcat_{\Lambda V}(\Lambda V)$.
\end{thm}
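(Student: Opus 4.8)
The statement splits into the two inequalities $mcat_{\Lambda V}(\Lambda V)\le cat(\Lambda V)$ and $cat(\Lambda V)\le mcat_{\Lambda V}(\Lambda V)$; the first is formal and the second is where the work lies. For $mcat_{\Lambda V}(\Lambda V)\le cat(\Lambda V)$ I would take a witness $(\Lambda V\otimes\Lambda W,f,g,\phi)$ of $cat(\Lambda V)\le m$ and forget the algebra structure. Recall that a relative Sullivan algebra is semifree as a module over its base; that the ideal $\Lambda^{>m}V$ coincides with $\Lambda^{>m}V\cdot\Lambda V$, so that the CDGA quotient $\Lambda V/\Lambda^{>m}V$ is precisely the module quotient appearing in the definition of $mcat$; and that CDGA morphisms over $\Lambda V$ are in particular DG $\Lambda V$-module morphisms, with $gf=id_{\Lambda V}$ and $\phi f=q$ supplying (strict forms of) the required homotopies. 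Hence the same diagram witnesses $mcat_{\Lambda V}(\Lambda V)\le m$.

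For the reverse inequality, put $m=mcat_{\Lambda V}(\Lambda V)$, with nothing to prove if it is infinite, and fix a minimal relative Sullivan model $\Lambda V\xrightarrow{i}\Lambda V\otimes\Lambda W\xrightarrow{\eta}\Lambda V/\Lambda^{>m}V$ of the projection $q$, so that $\eta$ is a quasi-isomorphism and $\eta i=q$. It then suffices to produce a CDGA retraction $g:\Lambda V\otimes\Lambda W\to\Lambda V$ with $gi=id_{\Lambda V}$, for then $(\Lambda V\otimes\Lambda W,i,g,\eta)$ is exactly a diagram of the form demanded by the definition of $cat$, so $cat(\Lambda V)\le m$. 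The first step is to turn the hypothesis into module-theoretic data. A witness of $mcat_{\Lambda V}(\Lambda V)\le m$ provides a semifree $\Lambda V$-module $Q$ with morphisms $f_Q:\Lambda V\to Q$ and $g_Q:Q\to\Lambda V$ and a quasi-isomorphism $\phi_Q:Q\to\Lambda V/\Lambda^{>m}V$ satisfying $g_Q f_Q\sim id_{\Lambda V}$ and $\phi_Q f_Q\sim q$. Since $\eta$ and $\phi_Q$ both exhibit semifree resolutions of $\Lambda V/\Lambda^{>m}V$, Proposition~\ref{commutative lift}(i), applied in both directions, produces a homotopy equivalence $\beta:\Lambda V\otimes\Lambda W\to Q$ of DG $\Lambda V$-modules, and the uniqueness clause forces $\beta i\sim f_Q$. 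Thus $\gamma:=g_Q\circ\beta$ is a DG $\Lambda V$-module morphism $\Lambda V\otimes\Lambda W\to\Lambda V$ with $\gamma i=g_Q\beta i\sim g_Q f_Q\sim id_{\Lambda V}$: a module-level homotopy retraction of $i$.

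It remains to promote $\gamma$ to a genuine CDGA retraction, which is the heart of the matter. The plan is to build $g$ by induction along the defining filtration $\Lambda V\otimes\Lambda W=\bigcup_k\Lambda V\otimes\Lambda W(k)$, starting from $g|_{\Lambda V}=id_{\Lambda V}$ and carrying along, besides $g$, a DG $\Lambda V$-module homotopy $\theta$ exhibiting $g\sim\gamma$ on the portion already constructed. When a new generator $w$ is adjoined --- so that $dw$ is a cocycle of the previous stage --- one must find $g(w)\in\Lambda V$ with $dg(w)=g(dw)$; the obstruction is the class $[g(dw)]\in H(\Lambda V)$, and it vanishes because $g-\gamma=d\theta+\theta d$ on the previous stage while $\gamma(dw)=d\gamma(w)$ is exact in $\Lambda V$, so one may take $g(w)=\gamma(w)+\theta(dw)$, up to a cocycle of $\Lambda V$ that remains at one's disposal. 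One then extends $g$ multiplicatively --- automatically a chain map, since $d$ is a derivation and $g$ is multiplicative --- and extends $\theta$ over the new generators and the monomials that involve them, which, by the shape of the equations, amounts to a further induction in which at each step one faces a potential cohomology class in $H(\Lambda V)$.

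The step I expect to be the main obstacle is precisely this propagation of $\theta$. It is genuinely delicate rather than routine: two module-level retractions of $i$ need not be homotopic, so $\theta$ does not extend for free, and the cocycle of $\Lambda V$ left free in each choice of $g(w)$ must be spent to cancel the obstructions to extending $\theta$ over the monomials of the new stage. One must verify that these obstruction classes are sufficiently coherent that a single choice of the free cocycles annihilates all of them at once; this is where the real content of the theorem lies. Everything else --- the formal inequality, the reduction to a CDGA retraction of the relative Sullivan model of $q$, the transport of the module retraction via Proposition~\ref{commutative lift}, and the bookkeeping at the bottom of the filtration --- is routine given the results already recorded above.
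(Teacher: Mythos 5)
The paper does not prove this statement at all: it is quoted as a known theorem of Hess (\cite{hess}, and Theorem 9.4 of \cite{FHT2}), so there is no internal proof to compare against. Judged on its own terms, your proposal has a genuine gap exactly where you flag it. The inequality $mcat_{\Lambda V}(\Lambda V)\le cat(\Lambda V)$ (forget the algebra structure; a relative Sullivan algebra is $\Lambda V$-semifree and $\Lambda V/\Lambda^{>m}V$ is the module quotient) is correct and routine, and your reduction of the converse to producing a CDGA retraction $g:\Lambda V\otimes\Lambda W\to\Lambda V$ of the minimal relative model of $q$, starting from a DG-module homotopy retraction $\gamma$ obtained via Proposition~\ref{commutative lift}, is the standard framing of the problem. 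But promoting $\gamma$ to a multiplicative retraction is not an exercise in degreewise obstruction-chasing: the inductive scheme you sketch (choose $g(w)=\gamma(w)+\theta(dw)$ plus a free cocycle, then try to re-extend the module homotopy $\theta$ over the new monomials) is precisely the naive approach, and you yourself concede that you have not verified that the obstruction classes can be simultaneously killed by the available cocycle choices. That verification \emph{is} Hess's theorem; it is false for general module retractions of general $\Lambda$-extensions, and the known proofs (Hess's original argument, or the treatment culminating in Theorem 9.4 of \cite{FHT2}) do not proceed by this direct induction but by a substantially more elaborate construction exploiting the specific structure of the fibration associated to $\Lambda V\to\Lambda V/\Lambda^{>m}V$ (wordlength filtrations, the holonomy action of $\Lambda V$ on the fiber, and carefully engineered auxiliary extensions). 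In short: the trivial direction and the reduction are fine, but the heart of the theorem is asserted as a hope rather than proved, so the proposal does not constitute a proof.
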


\begin{prop}\label{replace homotopy}
Let $A$ be a CDGA and $P=A\otimes V$ be a semifree $A$-module.

(i) Suppose that $f:P\to Q$ is an $A$-module morphism. Then it factors as $f=p\,\circ F$, where $p:\widetilde{Q}\stackrel{\simeq}{\longrightarrow} Q$ is a surjective quasi-isomorphism and $F:P\to\widetilde{Q}$ is injective.

(ii) Suppose in addition that there exist $A$-module morphisms $g:Q\to M$ and $h:P\to M$ such that $g\circ f\sim h$. Then there is an $A$-module quasi-isomorphism $p:\widetilde{Q}\stackrel{\simeq}{\longrightarrow} Q$, together with morphisms $F:P\to\widetilde{Q}$ and $G:\widetilde{Q}\to M$ such that $p\circ F=f$ and $G\circ F=h$. 

(iii) With the hypothesis of (i), let $I\subset A$ be an ideal and $q:P\to P/(I\cdot P)$ be the projection. Suppose that there exists an $A$-module morphism $g:Q\to P$ such that $gf\sim id_{P}$, and a quasi-isomorphism $\phi:Q\stackrel{\simeq}{\longrightarrow} P/(I\cdot P)$ such that $q\sim\phi\circ f$ and $gf\sim id_{P}$. Then $Q,f,g,\phi$ can be replaced by $\widetilde{Q}$, $F:P\to \widetilde{Q}$, $G:\widetilde{Q}\to P$, and $\Phi:\widetilde{Q}\stackrel{\simeq}{\longrightarrow} P/(I\cdot P)$ respectively, such that $\widetilde{Q}$ is quasi-isomorphic to $Q$, $q=\Phi\circ F$ and $G\circ F= id_{P}$.
$$
\xymatrix{
P \ar[dr]^F \ar[rdd]_q \ar[rr]^{id_P} & & P \\
& \widetilde{Q} \ar[ur]^G \ar[d]_{\Phi}^{\simeq} \\
& P/(I\cdot P).
}
$$.

(iv) All the $\widetilde{Q}$ in the above statements can be assumed to be semifree.
\end{prop}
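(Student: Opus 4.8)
The plan is to take $\widetilde{Q}$ to be the mapping cylinder of $f$ and, for (iv), to replace that cylinder afterwards by a surjective semifree resolution of itself. For (i), I would set $\widetilde{Q}=\mathrm{Cyl}(f):=Q\oplus P\oplus\overline{P}$, where $\overline{P}$ is a copy of $P$ with all degrees lowered by one (write $\overline{x}$ for the element corresponding to $x\in P$, so $|\overline{x}|=|x|-1$), equipped with the $A$-linear differential
\begin{equation*}
D(w,x,\overline{y})=\bigl(d_Q w+f(y),\ d_P x-y,\ -\overline{d_P y}\bigr).
\end{equation*}
Direct computations show $D^2=0$; that $F:P\to\widetilde{Q}$, $x\mapsto(0,x,0)$, and $p:\widetilde{Q}\to Q$, $(w,x,\overline{y})\mapsto w+f(x)$, are $A$-module chain maps with $p\circ F=f$, with $F$ injective and $p$ surjective; and that $p$ is a quasi-isomorphism, since the section $j:Q\to\widetilde{Q}$, $w\mapsto(w,0,0)$, satisfies $p\circ j=\mathrm{id}_Q$ while $(w,x,\overline{y})\mapsto(0,0,\overline{x})$ is an $A$-linear homotopy from $j\circ p$ to $\mathrm{id}_{\widetilde{Q}}$. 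This proves (i), apart from semifreeness.

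For (ii) and (iii) I would use the following ``universal property'' of the cylinder, again checked by a short computation with the formula for $D$: an $A$-module chain map $G:\widetilde{Q}\to M$ is the same datum as a pair of $A$-module chain maps $a:Q\to M$ and $b:P\to M$ together with an $A$-linear homotopy $\Theta$ satisfying $a\circ f-b=D_M\Theta+\Theta d_P$, the correspondence being $G(w,x,\overline{y})=a(w)+b(x)+\Theta(y)$; such $G$ satisfies $G\circ F=b$ and $G\circ j=a$. For (ii), apply this with $a=g$, $b=h$ and $\Theta$ the given homotopy $gf\sim h$: the resulting $G:\widetilde{Q}\to M$ has $G\circ F=h$, and $p\circ F=f$ from (i). For (iii), apply the recipe twice to the same cylinder: to $M=P$ with $(a,b)=(g,\mathrm{id}_P)$ and the homotopy $gf\sim\mathrm{id}_P$, producing $G:\widetilde{Q}\to P$ with $G\circ F=\mathrm{id}_P$; and to $M=P/(I\cdot P)$ with $(a,b)=(\phi,q)$ and the homotopy $\phi f\sim q$, producing $\Phi:\widetilde{Q}\to P/(I\cdot P)$ with $\Phi\circ F=q$ and $\Phi\circ j=\phi$. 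Because $H(j)=H(p)^{-1}$ is an isomorphism and $H(\phi)$ is an isomorphism, $H(\Phi)=H(\phi)\circ H(j)^{-1}$ is an isomorphism, so $\Phi$ is a quasi-isomorphism; together with $p:\widetilde{Q}\to Q$ this is precisely the strictly commutative diagram of (iii).

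For (iv), in each of (i)--(iii) I would choose a surjective semifree resolution $\pi:R\to\mathrm{Cyl}(f)$ (Proposition \ref{surjective resolution}) and replace $\widetilde{Q}$ by $R$. Post-composing with $\pi$ turns $p$, $G$, $\Phi$ into maps out of $R$ which are still surjective, resp. quasi-isomorphisms, as required; and since $P$ is semifree and $\pi$ is a surjective quasi-isomorphism, Proposition \ref{commutative lift}(ii) produces a lift $\widetilde{F}:P\to R$ of $F$ through $\pi$, i.e. $\pi\circ\widetilde{F}=F$. This $\widetilde{F}$ is automatically injective when $F$ is, since $\widetilde{F}(x)=0$ forces $F(x)=\pi\widetilde{F}(x)=0$. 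The required identities --- $(p\pi)\circ\widetilde{F}=f$, $(G\pi)\circ\widetilde{F}=h$ (or $=\mathrm{id}_P$), $(\Phi\pi)\circ\widetilde{F}=q$ --- follow by composing the corresponding identities over $\mathrm{Cyl}(f)$ with $\pi$.

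I do not expect a serious obstruction here; the two places that need genuine care rather than being mechanical are (a) fixing the Koszul signs in the differential $D$ on $\mathrm{Cyl}(f)$ so that $D^2=0$ and the various chain-homotopy identities hold, and (b) the identity $\Phi\circ j=\phi$ in (iii), which is exactly what upgrades ``$\Phi$ is a quasi-isomorphism'' from the mere homotopy $\phi f\sim q$ in the hypothesis. Everything else is bookkeeping.
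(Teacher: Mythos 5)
Your proposal is correct and follows essentially the same route as the paper: the paper's $\widetilde{Q}=P\oplus Q\oplus(A\otimes sV)$ with $D(sv)=v+f(v)-S(dv)$ is exactly your mapping cylinder (up to sign conventions), maps out of it are likewise assembled from a pair of morphisms plus the given homotopy, and (iv) is handled identically via a surjective semifree resolution and a lift through it. The only differences are cosmetic: you verify that $p$ is a quasi-isomorphism by an explicit homotopy inverse $j$ (the paper computes injectivity/surjectivity on cohomology directly), and you spell out $\Phi\circ j=\phi$ to see that $\Phi$ is a quasi-isomorphism in (iii), a point the paper leaves implicit.
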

\begin{proof}
(i) Set $\widetilde{Q}=P\oplus Q\oplus (A\otimes sV)$, where $(sV)^n=V^{n+1}$. Then $s:V\to sV$ induces a graded module morphism $S:P\to A\otimes sV$ over $A$, i.e. $S(a\cdot v)=(-1)^{|a|}a\otimes sv$ for $a\in A, v\in V$. The differential $D$ on $\widetilde{Q}$ is given as follows. $Dx=dx$ for $x\in P$, $D\alpha=d\alpha$ for $\alpha\in Q$, and $D(sv)=v+f(v)-S(dv)$. A straightforward calculation shows that $D$ is well defined.

Set $F:P\to\widetilde{Q}$ as the natural inclusion. Let $p:\widetilde{Q}\to Q$ be $f$ when restricted to $P$, be $-id_{Q}$ when restricted to $Q$, and be $0$ when restricted to $A\otimes sV$. For $sv\in SV$ we have
$$
p\circ D(sv)=f(v)-id_{Q}\circ f(v)=0=D\circ p(sv).
$$
It follows that $p$ is a surjective DG module morphism over $A$. We will show that it is a quasi-isomorphism.

Let $x+\alpha+\sum a_i\otimes sv_i$ be a cocycle in $\widetilde{Q}$ with $x\in P, \alpha\in Q, a_i\in A$ and $sv_i\in sV$. Suppose $p(x+\alpha+\sum a_i\otimes sv_i)=d\beta$ for some $\beta\in Q$. Then $f(x)-\alpha = p(x+\alpha+\sum a_i\otimes sv_i) = d\beta$. On the other hand, $D(x+\alpha+\sum a_i\otimes sv_i)=0$ implies that
$$
dx+\sum (-1)^{|a_i|}a_i\otimes v_i=0, \quad d\alpha+\sum (-1)^{|a_i|}a_i\otimes f(v_i)=0, \quad \sum \left( da_i\otimes v_i-(-1)^{|a_i|}a_i\cdot S(dv_i) \right) =0.
$$

Write $x$ as $\sum b_j\otimes w_j$ for $b_j\in A, w_j\in V$. Then $S(x)=\sum (-1)^{|b_j|} b_j\otimes sw_j$. Hence,
\begin{align*}
D(Sx) &= \sum b_j\otimes w_j+\sum b_j\otimes f(w_j)+\sum [(-1)^{|b_j|} db_j\otimes sw_j-b_j\cdot S(dw_j)] \\
&= x+f\left(\sum b_j\otimes w_j\right)+\sum[-S(db_j\otimes w_j)-(-1)^{|b_j|} S(b_j\cdot dw_j)] \\
&= x+f(x)-S\left(\sum d(b_j\otimes w_j) \right) \\
&= x+f(x)-S(dx).
\end{align*}
As $dx=-\sum (-1)^{|a_i|}a_i\otimes v_i$, $S(dx)=-\sum a_i\otimes sv_i$. Thus,
$$
x+\alpha+\sum a_i\otimes sv_i = [\alpha-f(x)]+[x+f(x)-S(dx)] = D(-\beta+Sx).
$$
This proves that $p$ injective is on cohomology.

For each cocycle $\alpha\in Q$, $-\alpha$ is a cocycle in $\widetilde{Q}$ and $p(-\alpha)=\alpha$. So $p$ surjective is on cohomology.

(ii) Construct $p:\widetilde{Q}\stackrel{\simeq}{\longrightarrow} Q$, and $F:P\to\widetilde{Q}$ as in (i). Then $p\circ F=f$. Since $gf\sim h$, there exists a graded module morphism $\theta:P\to P$ over $A$ such that $gf-h=d\theta+\theta d$. Set $G(x)=h(x)$ for $x\in P$, $G(\alpha)=-g(\alpha)$ for $\alpha\in Q$, and $G(sv)=-\theta(v)$ for $sv\in sV$. Then
$$
G\circ S(a\otimes v) = (-1)^{|a|} G(a\otimes sv) = -(-1)^{|a|} a\otimes \theta(v) = -\theta(a\otimes v)
$$
for any $a\in A$. So $G\circ S=-\theta$ on $P$, and we have
$$
G\circ D(sv) = G(v+f(v)-S(dv)) = h(v)-gf(v)+\theta(dv)=-d\circ \theta(v)=d\circ G(sv).
$$
It follows that $G$ is a DG module morphism over $A$. By construction clearly $G\circ F=h$.

(iii) First apply (i) to construct $F:P\to\widetilde{Q}$. This process is only depending on $f$. So we can apply the remaining steps in (ii) twice on the homotopies $q\sim\phi\circ f$ and $id_P\sim g\circ f$. Then we obtain two morphisms $\Phi:\widetilde{Q}\to P/(I\cdot P)$ and $G:\widetilde{Q}\to P$ such that $q=\Phi\circ F$ and $id_P=G\circ F$.

(iv) By Proposition \ref{surjective resolution} there exists a surjective semifree resolution $p':\widetilde{Q}'\to\widetilde{Q}$, and by Proposition \ref{commutative lift} (ii) there exists a lift $F':P\to\widetilde{Q}'$ of $f$ through $p'$ such that $p'\circ F'=F$. Then $\widetilde{Q},p,F,G,\Phi$ can be replaced by $\widetilde{Q}',p\circ p',F',G\circ p',\Phi\circ p'$ respectively. Since $F$ is injective, it follows that $F'$ is also injective. As both $p$ and $p'$ are surjective, so is $p\circ p'$.
\end{proof}

\begin{cor}\label{Def2 of mcat}
Let $\Lambda V$ be a minimal Sullivan algebra and $P$ be a semifree $\Lambda V$-module. If $mcat(P)=m$, then there is a $\Lambda V$-module $Q$, together with $\Lambda V$-module morphisms $f:P\to Q$, $g:Q\to P$, and a quasi-isomorphism $\phi:Q\stackrel{\simeq}{\longrightarrow} P/(\Lambda^{>m} V\cdot P)$ making the graph \eqref{mcat graph} commutative, i.e. $\phi\circ f$ is the natural projection $q:P\to P/(\Lambda^{>m} V\cdot P)$ and $g\circ f= id_{P}$.
\end{cor}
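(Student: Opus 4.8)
The plan is to derive this as an immediate consequence of Proposition \ref{replace homotopy}. Unwinding the definition, $mcat(P)=m$ supplies a semifree $\Lambda V$-module $Q_0$ with $\Lambda V$-module morphisms $f_0\colon P\to Q_0$, $g_0\colon Q_0\to P$ and a quasi-isomorphism $\phi_0\colon Q_0\stackrel{\simeq}{\longrightarrow} P/(\Lambda^{>m}V\cdot P)$ satisfying $g_0\circ f_0\sim id_P$ and $\phi_0\circ f_0\sim q$, where $q$ is the natural projection. The task is to replace $Q_0$ by a quasi-isomorphic module over which these two homotopies become equalities.

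First I would note that $\Lambda^{>m}V=\Lambda^{\ge m+1}V$ is an ideal of $\Lambda V$: multiplying an element of wordlength greater than $m$ by anything keeps the wordlength greater than $m$. Hence $P/(\Lambda^{>m}V\cdot P)$ is precisely the quotient $P/(I\cdot P)$ occurring in Proposition \ref{replace homotopy} for the ideal $I=\Lambda^{>m}V$. Since $P$ is semifree over $\Lambda V$, every hypothesis of Proposition \ref{replace homotopy}(iii) is in place --- the morphism $f_0$, the ideal $I$, the morphism $g_0$ with $g_0 f_0\sim id_P$, and the quasi-isomorphism $\phi_0$ with $q\sim\phi_0 f_0$. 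Applying it yields a $\Lambda V$-module $\widetilde Q$ (quasi-isomorphic to $Q_0$) together with morphisms $F\colon P\to\widetilde Q$, $G\colon\widetilde Q\to P$ and a quasi-isomorphism $\Phi\colon\widetilde Q\stackrel{\simeq}{\longrightarrow} P/(\Lambda^{>m}V\cdot P)$ with $q=\Phi\circ F$ and $G\circ F=id_P$. Setting $Q=\widetilde Q$, $f=F$, $g=G$, $\phi=\Phi$ makes the diagram \eqref{mcat graph} commute; if a semifree $Q$ is desired, Proposition \ref{replace homotopy}(iv) permits the further replacement.

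There is no genuine obstacle here: all the work is already encapsulated in Proposition \ref{replace homotopy}, and this corollary merely records its specialization to $A=\Lambda V$ and $I=\Lambda^{>m}V$. The only points that need a word of justification are that $\Lambda^{>m}V$ really is an ideal --- so the target has the shape required by the proposition --- and that the homotopy-commutative data furnished by $mcat(P)=m$ match the input hypotheses of part (iii) literally.
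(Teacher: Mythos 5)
Your argument is correct and is exactly the route the paper intends: the corollary is stated as an immediate consequence of Proposition \ref{replace homotopy}(iii) (with $A=\Lambda V$, $I=\Lambda^{>m}V$, which is indeed an ideal), applied to the homotopy-commutative data furnished by $mcat(P)=m$, with part (iv) available if one wants $Q$ semifree. No gaps; your observation that the corollary only asks for a $\Lambda V$-module $Q$, so (iv) is optional, matches the paper's statement.
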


\subsection{Module Categories over general KS complex}

In \cite{FHT}, for a general Sullivan algebra $\Lambda V$, $cat(\Lambda V)$ can also be defined following the format of Definition \ref{def of cat}. It can be shown that this definition matches the definition by its minimal model. Similarly, $e(\Lambda V)$ can be defined following the format of Definition \ref{def of e}. In this subsection, we will prove that the module category and Toomer invariant of DG modules over non-minimal Sullivan algebras can also be defined in a similar way. Such invariants will be used to prove the main theorems.

\begin{defn}
Let $\Lambda V$ be a connected KS complex (not necessarily being Sullivan or minimal), and $P$ be a $\Lambda V$-module. $mcat_{\Lambda V}(P)$ is defined as the least number $m$ such that there is a semifree $\Lambda V$-module $Q$ together with DG module morphisms $f:P\to Q$, $g:Q\to P$, and $\phi:Q\to P/(\Lambda^{>m} V\cdot P)$ making the diagram \eqref{mcat graph} commutative (or just homotopy commutative according to Proposition \ref{replace homotopy}).

$e_{\Lambda V}(P)$ is defined as the least number $m$ such that the projection $q:P\to P/(\Lambda^{>m} V\cdot P)$ is injective on cohomology.

For a general $\Lambda V$-module $M$, we also define $mcat_{\Lambda V}(M)$ as the module category of its semifree resolution, since it is independent of the choice of the resolution. Similarly, $e_{\Lambda V}(M)$ is defined as the Toomer invariant of its semifree resolution.
\end{defn}

\begin{rmk}
We may also give such definitions even if $\Lambda V$ is not connected. In this case there exists a cocycle $v\in V^0$. Then the cohomology classes of all $v^m$ are non-trivial and $q^*:H^*(\Lambda V) \to H^*(\Lambda V/\Lambda^{>m} V)$ can never be injective. So $e_{\Lambda V}(\Lambda V)=mcat_{\Lambda V}(\Lambda V)=\infty$.
\end{rmk}

When $\Lambda V$ is connected, it has a minimal model $\Lambda Z$. Let $\psi:\Lambda Z\stackrel{\simeq}{\longrightarrow}\Lambda V$ be the quasi-isomorphism. The $\Lambda V$-semifree module $P$ naturally has a $\Lambda Z$-module structure, defined as $z\cdot x:=\psi(z)\cdot x$ for $z\in Z,x\in P$.

\begin{thm}\label{mcat over non-minimal}
With the hypotheses and notations above, $mcat_{\Lambda V}(P)=mcat_{\Lambda Z}(P)$.
\end{thm}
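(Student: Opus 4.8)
The plan is to show the two module categories bound each other, using the minimal model quasi-isomorphism $\psi\colon\Lambda Z\xrightarrow{\simeq}\Lambda V$ to move a diagram witnessing one bound to a diagram witnessing the other. The key observation is that $\psi$ intertwines the two natural projections: since $\psi$ sends $\Lambda^{>m}Z$ into $\Lambda^{>m}V$ (this needs the augmentation hypothesis on $V^0$, which is presumably where Theorem 2.2 of \cite{Halperin} enters), there is an induced map $\Lambda Z/\Lambda^{>m}Z\to\Lambda V/\Lambda^{>m}V$, and correspondingly for a $\Lambda V$-module $P$ regarded also as a $\Lambda Z$-module, a natural surjection $\pi\colon P/(\Lambda^{>m}Z\cdot P)\to P/(\Lambda^{>m}V\cdot P)$ commuting with the two projections $q_Z$ and $q_V$ out of $P$. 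One should first check that $\pi$ is a quasi-isomorphism; I expect this to follow by a standard filtration/spectral-sequence comparison argument (filter by wordlength in $V$ resp.\ $Z$), or by citing the corresponding fact for the algebras themselves and tensoring up, using that everything is a field so semifree resolutions behave well.

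For the inequality $mcat_{\Lambda V}(P)\le mcat_{\Lambda Z}(P)$: take a diagram of the form \eqref{mcat graph} over $\Lambda Z$, with semifree $\Lambda Z$-module $Q$, maps $f,g$ and quasi-isomorphism $\phi\colon Q\xrightarrow{\simeq}P/(\Lambda^{>m}Z\cdot P)$. I would extend scalars, setting $\widehat{Q}=\Lambda V\otimes_{\Lambda Z}Q$, which is a semifree $\Lambda V$-module; the maps $f,g$ (which are only $\Lambda Z$-linear but $P$ is already a $\Lambda V$-module) get replaced by their $\Lambda V$-linear extensions $\widehat{f}\colon P\to\widehat{Q}$, $\widehat{g}\colon\widehat{Q}\to P$, and one composes $\phi$ with $\pi$ and with the natural map $Q\to\widehat{Q}$ appropriately. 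The point that $\widehat{Q}\to Q$-type comparison is a quasi-isomorphism again uses that $\psi$ is a quasi-isomorphism and $Q$ is semifree (so $\Lambda V\otimes_{\Lambda Z}Q\simeq Q$ as complexes, flatness over a field). Then $\pi\circ(\text{induced }\phi)\colon\widehat{Q}\to P/(\Lambda^{>m}V\cdot P)$ is a quasi-isomorphism and the triangle identities for $\widehat{f},\widehat{g}$ hold up to homotopy, giving $mcat_{\Lambda V}(P)\le m$.

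For the reverse inequality $mcat_{\Lambda Z}(P)\le mcat_{\Lambda V}(P)$: start with a diagram over $\Lambda V$ with semifree $\Lambda V$-module $Q'$ and quasi-isomorphism $\phi'\colon Q'\xrightarrow{\simeq}P/(\Lambda^{>m}V\cdot P)$. Restrict scalars along $\psi$ so $Q'$ becomes a $\Lambda Z$-module; it need not be $\Lambda Z$-semifree, so replace it by a $\Lambda Z$-semifree resolution $\widetilde{Q}\xrightarrow{\simeq}Q'$, lifting $f'$ through the (WLOG surjective, by Proposition \ref{surjective resolution}) resolution via Proposition \ref{commutative lift}(ii). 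Now the target is wrong: we have a quasi-isomorphism to $P/(\Lambda^{>m}V\cdot P)$, not to $P/(\Lambda^{>m}Z\cdot P)$. Here I use the quasi-isomorphism $\pi\colon P/(\Lambda^{>m}Z\cdot P)\xrightarrow{\simeq}P/(\Lambda^{>m}V\cdot P)$ established above: since $\widetilde{Q}$ is $\Lambda Z$-semifree and $\pi$ is a surjective quasi-isomorphism (make it surjective, or use Proposition \ref{commutative lift}(i) to lift up to homotopy), lift the composite $\widetilde{Q}\to Q'\xrightarrow{\phi'}P/(\Lambda^{>m}V\cdot P)$ through $\pi$ to a quasi-isomorphism $\widetilde{\phi}\colon\widetilde{Q}\to P/(\Lambda^{>m}Z\cdot P)$, arranging compatibility with $q_Z$ up to homotopy using that $\pi q_Z=q_V$. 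Finally Proposition \ref{replace homotopy} converts the resulting homotopy-commutative diagram into a strictly commutative one, yielding $mcat_{\Lambda Z}(P)\le m$.

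The main obstacle I anticipate is the bookkeeping in the second inequality: keeping the homotopies consistent while simultaneously changing the module ($Q'\rightsquigarrow\widetilde{Q}$) and the target ($P/(\Lambda^{>m}V\cdot P)\rightsquigarrow P/(\Lambda^{>m}Z\cdot P)$), and verifying that the lifted maps still satisfy $gf\sim id_P$ and $\phi f\sim q_Z$. Proposition \ref{replace homotopy}(iii)--(iv) is designed precisely for this kind of rigidification, so the strategy is to defer all strictification to a single invocation of it at the end, carrying only homotopy-commutativity through the intermediate steps. The one genuinely external input is that $\pi$ is a quasi-isomorphism and that $\psi(\Lambda^{>m}Z)\subseteq\Lambda^{>m}V$ — the latter being exactly the role of the augmentation on $V^0$ flagged in the conventions.
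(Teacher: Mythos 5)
Your outline shares the paper's broad shape (compare the two quotients by a map $\pi$, move the diagram back and forth, strictify with Proposition \ref{replace homotopy}), but it leaves unproved exactly the point on which the whole theorem rests, and one of your constructions is not well defined. The central fact is that $\pi\colon P/(\Lambda^{>m}Z\cdot P)\to P/(\Lambda^{>m}V\cdot P)$ is a quasi-isomorphism; you only ``expect'' this from a filtration argument or from ``tensoring up'' the algebra-level statement. In the paper this is where the real work happens: one first invokes Halperin's structure theorem (Theorem 2.2 of \cite{Halperin} -- this, not the containment $\psi(\Lambda^{>m}Z)\subseteq\Lambda^{>m}V$, is what the augmentation on $V^0$ is for; that containment is automatic for degree reasons) to write $\Lambda V\cong\Lambda Z\otimes\Lambda(U\oplus dU)$ with $\psi$ the inclusion and $\Lambda(U\oplus dU)$ contractible. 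Only with this decomposition does one (a) see that $P$ restricted along $\psi$ is $\Lambda Z$-semifree at all -- without which $P/(\Lambda^{>m}Z\cdot P)$ does not even compute $mcat_{\Lambda Z}(P)$, since the definition over $\Lambda Z$ would otherwise go through a $\Lambda Z$-semifree resolution of $P$ -- and (b) prove that $\pi$ is a quasi-isomorphism, via the wordlength spectral sequence whose $E_0$ comparison reduces to the contractibility of $\Lambda^{>i}(U\oplus dU)$ and whose convergence uses collapse at $E_{m+1}$. Neither of your suggested shortcuts substitutes for this: ``flatness over a field'' does not preserve quasi-isomorphisms of DG modules under $\otimes_{\Lambda Z}$, and the algebra-level statement you would ``tensor up'' is itself proved by the same structure theorem.

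There is also a concrete flaw in your first inequality: a $\Lambda Z$-linear map $f\colon P\to Q$ has no $\Lambda V$-linear ``extension'' $P\to\Lambda V\otimes_{\Lambda Z}Q$ (the assignment $x\mapsto 1\otimes f(x)$ is only $\Lambda Z$-linear, and $P$ is not presented as an induced module). The paper instead applies $\Lambda V\otimes_{\Lambda Z}-$ to the entire diagram, so the source becomes $\Lambda V\otimes_{\Lambda Z}P\cong\Lambda(U\oplus dU)\otimes P$, and then uses that this is $\Lambda V$-semifree and quasi-isomorphic to $P$ (again via contractibility of $\Lambda(U\oplus dU)$), so that $mcat_{\Lambda V}(P)=mcat_{\Lambda V}(\Lambda V\otimes_{\Lambda Z}P)\leq m$; one also needs the analogue of the $\pi$-comparison for $\Lambda V\otimes_{\Lambda Z}\bigl(P/(\Lambda^{>m}Z\cdot P)\bigr)$. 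Your reverse inequality (lifting $\phi$ through the surjective quasi-isomorphism $\pi$ and strictifying) does match the paper once $\pi$ is known to be a quasi-isomorphism and $P$, $Q$ are known to be $\Lambda Z$-semifree. So the proposal is repairable, but as written it defers the theorem's actual content -- the structure theorem and the spectral-sequence argument -- to hand-waving.
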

\begin{proof}
We will use the fact that $\Lambda V$ can be written as a tensor product of two KS complexes $(\Lambda Z,d)\otimes(\Lambda(U\oplus dU),d)$, where $\Lambda Z$ is its minimal model and $\Lambda(U\oplus dU)$ is contractible (i.e. $d:U\to dU$ is an isomorphism) \cite[Theorem 2.2]{Halperin}. When $\Lambda V$ is a Sullivan algebra, a proof can also be found in Theorem 14.9 of \cite{FHT}. Without loss of generality, we may assume that $\psi:\Lambda Z\stackrel{\simeq}{\longrightarrow}\Lambda V$ is just the inclusion.

Observe that $P$ is also $\Lambda Z$-semifree. Indeed, write it as $\Lambda Z\otimes\Lambda(U\oplus dU)\otimes W$ such that $W$ is the union of an increasing sequence of vector space satisfying $dW(k)\subset\Lambda Z\otimes\Lambda(U\oplus dU)\otimes W(k-1)$. As $\Lambda(U\oplus dU)$ is a KS complex, we can give $\Lambda(U\oplus dU)\otimes W$ a filtration $\widetilde{W}(k)$ such that $d\widetilde{W}(k)\subset\Lambda Z\otimes\widetilde{W}(k-1)$. Namely, let $\widetilde{W}(-1)=0$ and inductively set $\widetilde{W}(k)$ as the subspace of elements whose image under $d$ is in $\Lambda Z\otimes\widetilde{W}(k-1)$. Then use induction on the filtrations of $\Lambda(U\oplus dU)$ and $W$ to show $\Lambda(U\oplus dU)\otimes W \subset \bigcup\widetilde{W}(k)$.

So when $mcat_{\Lambda V}(P)=m$, the commutative diagram \eqref{mcat graph} of $\Lambda V$-modules is also of $\Lambda Z$-modules. The projection $q$ factors as $P \stackrel{q_1}{\longrightarrow} P/(\Lambda^{>m} Z\cdot P) \stackrel{q_2}{\longrightarrow} P/(\Lambda^{>m} V\cdot P)$. We will show that $q_2$ is a quasi-isomorphism, so that $P/(\Lambda^{>m} V\cdot P)$ in \eqref{mcat graph} can be replaced by $P/(\Lambda^{>m} Z\cdot P)$. This proof does not use the condition $mcat_{\Lambda V}(P)=m$, so $q_2$ is a quasi-isomorphism for any $m$.

Equip $P$ with a filtration by setting $P(n)=\Lambda^{\geq n} V\cdot P$, which is isomorphic to $\Lambda^{\geq n}(Z\oplus U\oplus dU)\otimes W$. This also gives filtrations on the quotient spaces $P/(\Lambda^{>m} Z\cdot P)$ and $P/(\Lambda^{>m} V\cdot P)$, which are preserved by $q_2$. The induced $E_0$ pages of spectral sequences are
$$
(\Lambda^{\leq m} Z,0)\otimes(\Lambda(U\oplus dU),d)\otimes(W,\bar{d})
$$
and
$$
\bigoplus_{0\leq i\leq m} (\Lambda^{m-i} Z,0)\otimes(\Lambda(U\oplus dU)/\Lambda^{>i}(U\oplus dU),d)\otimes(W,\bar{d})
$$
respectively. Here $\bar{d}:W(k)\to W(k-1)$ is induced by the differential on $W$.

Note that the projection $\Lambda(U\oplus dU) \to \Lambda(U\oplus dU)/\Lambda^{>i}(U\oplus dU)$ is quasi-isomorphic for all $i\geq 0$, as $\Lambda^{>i}(U\oplus dU)$ has a trivial cohomology. Actually, since $\Lambda(U\oplus dU)$ is contractible, for any cocycle $x\in\Lambda^{>i}(U\oplus dU)$, there exist a constant $c\in\mathbbm{k}$ and some $y\in\Lambda(U\oplus dU)$ such that $x-c=dy$. As $d$ does not change the wordlength, $x$ is equal to the image of $d$ on the part of $y$ whose wordlength is greater than $i$. It follows that $x$ is a coboundary in $\Lambda^{>i}(U\oplus dU)$ (and  $c$ has to be 0).

Therefore, $q_2$ induces a quasi-isomorphism between the $E_0$ pages, then isomorphisms on higher $E_r$ pages. As $P(m+1)=\Lambda^{>m} V\cdot P$, the spectral sequence for $P/(\Lambda^{>m} V\cdot P)$ collapses at the page $E_{m+1}$. Then so is the the spectral sequence for $P/(\Lambda^{>m} Z\cdot P)$. Thus, $q_2$ induces an isomorphism between cohomologies.

Lift $\phi:Q \stackrel{\simeq}{\longrightarrow} P/(\Lambda^{>m} V\cdot P)$ through the surjective quasi-isomorphism $q_2$. We obtain a quasi-isomorphism $\Phi: Q \stackrel{\simeq}{\longrightarrow} P/(\Lambda^{>m} Z\cdot P)$ such that $q_2\circ\Phi=\phi$. Then $q_2\circ q_1=q=\phi\circ f=q_2\circ\Phi\circ f$, where $f:P\to Q$ is the morphism in \eqref{mcat graph}. By Proposition \ref{commutative lift} (i) we have $q_1\sim\Phi\circ f$, and by Proposition \ref{replace homotopy} we can assume $q_1=\Phi\circ f$, i.e. the following diagram commutes.
$$
\xymatrix{
P \ar[r]^f \ar[rd]_{q_1} & Q \ar[rd]^(0.4){\phi}_(0.4){\simeq} \ar[d]_{\Phi}^{\simeq} \\
& P/(\Lambda^{>m} Z\cdot P) \ar[r]^{q_2}_{\simeq} & P/(\Lambda^{>m} V\cdot P)
}
$$
Therefore, we can replace $P/(\Lambda^{>m} V\cdot P)$, $q$, $\phi$ by $P/(\Lambda^{>m} Z\cdot P)$, $q_1$, $\Phi$ respectively. It follows that $mcat_{\Lambda Z}(P) \leq m = mcat_{\Lambda V}(P)$.

Conversely, suppose $mcat_{\Lambda Z}(P)=m$. We have a diagram of the form \eqref{mcat graph} of $\Lambda Z$-modules, where $P/(\Lambda^{>m} V\cdot P)$ is replaced by $P/(\Lambda^{>m} Z\cdot P)$. Let $\Lambda V\otimes_{\Lambda Z}-$ acting on all the modules and morphisms. Then we obtain the following commutative diagram of $\Lambda V$-modules.
$$
\xymatrix{
\Lambda V\otimes_{\Lambda Z}P \ar[rd] \ar[rdd] \ar[rr]^{id} & & \Lambda V\otimes_{\Lambda Z}P\\
& \Lambda V\otimes_{\Lambda Z}Q \ar[ur] \ar[d]_{\simeq} \\
& \Lambda V\otimes_{\Lambda Z}\left(P/(\Lambda^{>m} Z\cdot P)\right)
}
$$

Observe that $\Lambda V\otimes_{\Lambda Z}P \cong (\Lambda(U\oplus dU),d)\otimes(P,d)$ is $\Lambda V$-semifree and quasi-isomorphic to $P$. Besides,
$$
\Lambda V\otimes_{\Lambda Z}\left(P/(\Lambda^{>m} Z\cdot P)\right) \cong (\Lambda(U\oplus dU)\otimes P)/(\Lambda^{>m} Z\cdot(\Lambda(U\oplus dU)\otimes P))
$$
Same as the discussion about $q_2$ above, we have a quasi-isomorphism
$$
(\Lambda(U\oplus dU)\otimes P)/(\Lambda^{>m} Z\cdot(\Lambda(U\oplus dU)\otimes P)) \stackrel{\simeq}{\longrightarrow} (\Lambda(U\oplus dU)\otimes P)/(\Lambda^{>m} V\cdot(\Lambda(U\oplus dU)\otimes P)).
$$
Therefore,
$$
mcat_{\Lambda V}(P) = mcat_{\Lambda V}(\Lambda(U\oplus dU)\otimes P) \leq m = mcat_{\Lambda Z}(P).
$$
This proves that $mcat_{\Lambda V}(P) = mcat_{\Lambda Z}(P)$.

Finally, as shown above, $q_2:P/(\Lambda^{>m} Z\cdot P) \stackrel{\simeq}{\longrightarrow} P/(\Lambda^{>m} V\cdot P)$ is a quasi-isomorphism for all $m$. So $q_1:P\to P/(\Lambda^{>m} Z\cdot P)$ is injective on cohomology if and only if $q_2\circ q_1$ is. Thus, $e_{\Lambda V}(P) = e_{\Lambda Z}(P)$.
\end{proof}

\begin{cor}
For any connected Sullivan algebra $\Lambda V$, $mcat_{\Lambda V}(\Lambda V)=cat(\Lambda V)$.
\end{cor}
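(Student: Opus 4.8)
The plan is to transport the identity from the minimal model of $\Lambda V$, where it is Hess' theorem, back up to $\Lambda V$ itself using Theorem \ref{mcat over non-minimal}. Let $\psi:\Lambda Z\stackrel{\simeq}{\longrightarrow}\Lambda V$ be the minimal model. Since $cat$ of a connected CDGA is by definition the $cat$ of its minimal model (equivalently, by the cited fact that for a general Sullivan algebra the definition following Definition \ref{def of cat} agrees with the one via the minimal model), we have $cat(\Lambda V)=cat(\Lambda Z)$; and Hess' theorem \cite{hess} gives $cat(\Lambda Z)=mcat_{\Lambda Z}(\Lambda Z)$. Thus the whole statement reduces to showing $mcat_{\Lambda V}(\Lambda V)=mcat_{\Lambda Z}(\Lambda Z)$.

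First I would recall, exactly as in the proof of Theorem \ref{mcat over non-minimal}, the decomposition $\Lambda V\cong(\Lambda Z,d)\otimes(\Lambda(U\oplus dU),d)$ with $\Lambda(U\oplus dU)$ contractible \cite[Theorem 2.2]{Halperin}, together with the observation made there that the KS-filtration of $\Lambda(U\oplus dU)$ exhibits $\Lambda V$ as a semifree $\Lambda Z$-module. The CDGA retraction $r:\Lambda V\to\Lambda Z$ collapsing $U\oplus dU$ to $0$ is in particular a $\Lambda Z$-module morphism, and it is a quasi-isomorphism because $\Lambda(U\oplus dU)$ has trivial reduced cohomology. Hence, with the $\Lambda Z$-module structure induced by $\psi$, the module $\Lambda V$ is a $\Lambda Z$-semifree resolution of $\Lambda Z$. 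Since $mcat_{\Lambda Z}$ is independent of the choice of semifree resolution, this gives $mcat_{\Lambda Z}(\Lambda V)=mcat_{\Lambda Z}(\Lambda Z)$.

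Next I would apply Theorem \ref{mcat over non-minimal} with $P=\Lambda V$: the module $\Lambda V$ is $\Lambda V$-semifree (indeed $\Lambda V$-free), and the $\Lambda Z$-structure used in that theorem is precisely the $\psi$-induced one, so the theorem yields $mcat_{\Lambda V}(\Lambda V)=mcat_{\Lambda Z}(\Lambda V)$. Combining the two equalities,
$$
mcat_{\Lambda V}(\Lambda V)=mcat_{\Lambda Z}(\Lambda V)=mcat_{\Lambda Z}(\Lambda Z)=cat(\Lambda Z)=cat(\Lambda V).
$$

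There is no delicate estimate left to make here: Theorem \ref{mcat over non-minimal} has already absorbed the spectral-sequence comparison, and Hess' theorem supplies the minimal-model case. The only points needing care are bookkeeping ones, namely checking that the $\Lambda Z$-module structure under which ``$\Lambda V$ is a semifree resolution of $\Lambda Z$'' is read off is the same $\psi$-induced structure appearing in Theorem \ref{mcat over non-minimal}, and confirming (from \cite{FHT}) that for non-minimal connected Sullivan algebras the direct and the minimal-model definitions of $cat$ coincide --- or simply adopting the latter as the definition, which makes that step vacuous.
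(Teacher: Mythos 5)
Your proposal is correct and follows essentially the same route as the paper: the paper's proof is exactly the chain $mcat_{\Lambda V}(\Lambda V)=mcat_{\Lambda Z}(\Lambda V)=mcat_{\Lambda Z}(\Lambda Z)=cat(\Lambda Z)=cat(\Lambda V)$, using Theorem \ref{mcat over non-minimal}, invariance of $mcat_{\Lambda Z}$ under quasi-isomorphism of semifree modules, Hess' theorem, and the definition of $cat$ via the minimal model. Your extra justification of the middle equality via the retraction $\Lambda V\cong\Lambda Z\otimes\Lambda(U\oplus dU)\to\Lambda Z$ is a fine way to make explicit what the paper leaves implicit.
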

\begin{proof}
Let $\Lambda Z$ be a minimal Sullivan model of $\Lambda V$, then
$$
mcat_{\Lambda V}(\Lambda V) = mcat_{\Lambda Z}(\Lambda V) = mcat_{\Lambda Z}(\Lambda Z) = cat(\Lambda Z) = cat(\Lambda V).
$$
\end{proof}

\section{Main Theorems}

\subsection{Estimate the Module Category}
\begin{thm}\label{main}
Let $\Lambda V=\Lambda Z\otimes\Lambda W$ be a relative Sullivan algebra. Suppose that $P$ is a $\Lambda V$-semifree module of the form $\Lambda V\otimes U$, where $U$ is a vector space of the form $\bigcup U(k)$ satisfying $dU(k) \subset \Lambda Z\otimes U(k-1)$. Then the subspace $\overline{P}=\Lambda Z\otimes U$ is a $\Lambda Z$-semifree module. If $mcat_{\Lambda Z}(\overline{P})=m$ and $cat(\Lambda W)=n$, then $mcat_{\Lambda V}(P)\leq (m+1)(n+2)-2$.
\end{thm}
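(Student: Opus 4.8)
The plan is to transplant the ``base'' data --- which a priori lives over $\Lambda Z$ --- into the world of $\Lambda V$-modules via the functor $\Lambda V\otimes_{\Lambda Z}-$, then to refine it layer by layer using the ``fiber'' data provided by $cat(\Lambda W)=mcat_{\Lambda W}(\Lambda W)$, and finally to read off the word-length of the resulting quotient of $\Lambda V$ by a bookkeeping argument; this last count is where I expect the real work to lie.

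\textbf{Preparation.} First I would record the two hypotheses as strictly commutative retract diagrams. Corollary \ref{Def2 of mcat}, together with Proposition \ref{replace homotopy} to dispose of any minimality assumption on $\Lambda Z$, yields a semifree $\Lambda Z$-module $\overline Q$ with morphisms $\bar f\colon\overline P\to\overline Q$ and $\bar g\colon\overline Q\to\overline P$ and a quasi-isomorphism $\bar\phi\colon\overline Q\xrightarrow{\ \simeq\ }\overline P/(\Lambda^{>m}Z\cdot\overline P)$ satisfying $\bar g\bar f=\mathrm{id}_{\overline P}$ and $\bar\phi\bar f=\bar q$. On the fiber side, the Corollary $mcat_{\Lambda W}(\Lambda W)=cat(\Lambda W)=n$ together with Proposition 9.3 of \cite{FHT2} (applied, via Theorem \ref{mcat over non-minimal}, to the minimal model of $\Lambda W$) shows that $mcat_{\Lambda W}(M)\le n$ for \emph{every} $\Lambda W$-module $M$. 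Observing that $(Z)=\Lambda^{\ge 1}Z\cdot\Lambda V$ is a differential ideal of $\Lambda V$ with $\Lambda V/(Z)=(\Lambda W,\bar d)$, one checks that $\Lambda^{>k}V$ and $\Lambda^{>k}W$ act identically on any $\Lambda W$-module pulled back along $\Lambda V\to(\Lambda W,\bar d)$; so, after pulling back the fiber data along this projection and lifting through $\Lambda V$-semifree resolutions (Proposition \ref{commutative lift}), one gets that every such ``fiber-type'' $\Lambda V$-module $M$ has $mcat_{\Lambda V}(M)\le n$, witnessed through the quotient $M/(\Lambda^{>n}W\cdot M)$.

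\textbf{Transplanting the base data and refining.} Applying the exact functor $\Lambda V\otimes_{\Lambda Z}-$ to the first diagram and using $P=\Lambda V\otimes_{\Lambda Z}\overline P$, one obtains a $\Lambda V$-semifree module $R:=\Lambda V\otimes_{\Lambda Z}\overline Q$ with morphisms $F\colon P\to R$, $G\colon R\to P$ with $GF\sim\mathrm{id}_P$, and a quasi-isomorphism $\Phi\colon R\xrightarrow{\ \simeq\ }\Lambda V\otimes_{\Lambda Z}\bigl(\overline P/(\Lambda^{>m}Z\cdot\overline P)\bigr)$ with $\Phi F\sim$ the canonical projection. The key structural observation is that this target carries a filtration of length $m+1$ whose successive quotients have the form $\Lambda^{j}Z\otimes(\text{fiber-type }\Lambda V\text{-module})$ for $0\le j\le m$ --- the bottom one being $\Lambda V\otimes_{\Lambda Z}(\overline P/(\Lambda^{\ge 1}Z\cdot\overline P))\cong P/(\Lambda^{\ge1}Z\cdot P)$, a semifree module over the fiber. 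I would then resolve each subquotient over $\Lambda V$ using the fiber estimate ($mcat_{\Lambda V}\le n$, witnessed through a $\Lambda^{>n}W$-quotient) and glue these along the filtration --- using mapping-cylinder replacements from Proposition \ref{replace homotopy} rather than naive composition, since fiber-type modules are not themselves $\Lambda V$-semifree --- to assemble a single semifree $\Lambda V$-module $Q$ together with a diagram of the form \eqref{mcat graph} for $P$. Throughout, the necessary identifications among the truncations $P/(\Lambda^{>k}V\cdot P)$, $P/(\Lambda^{>k}Z\cdot P)$ and the tensored-up quotients are quasi-isomorphisms proved exactly as for the map $q_2$ in the proof of Theorem \ref{mcat over non-minimal}, by a filtration/spectral-sequence comparison. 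Each gluing step combines a module whose $mcat$ is witnessed through some word-length $\le a$ with one witnessed through word-length $\le n$; because of the discrepancy between the $\Lambda Z$-word-length actually carried by a subquotient $\Lambda^{j}Z\otimes(\cdots)$ and the $\Lambda W$-word-length seen by its module category over $\Lambda V$, one loses $n+2$ rather than $n+1$ per base-layer, and tracking this across the $m+1$ layers yields $mcat_{\Lambda V}(P)\le (m+1)(n+2)-2$.

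\textbf{Main obstacle.} The hard part is precisely this assembly: fitting the per-layer semifree modules into one semifree $\Lambda V$-module realizing \eqref{mcat graph}, while keeping exact control of the word-length in $V=Z\oplus W$ of the quotient through which $\phi$ must factor. The source of the difficulty --- and of the extra unit of slack that turns the expected $(m+1)(n+1)-1$ into $(m+1)(n+2)-2$ --- is that the fiber $(\Lambda W,\bar d)$ is only a quotient of $\Lambda V$, not a sub-CDGA, so the fiber data cannot be transplanted verbatim but only after a resolution that spreads $\Lambda Z$-word-length upward; that this slack disappears when $\Lambda V$ is itself minimal is exactly the content of Theorem \ref{main for minimal}.
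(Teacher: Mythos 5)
Your overall strategy is the same as the paper's: tensor the base retract up along $\Lambda V\otimes_{\Lambda Z}-$ to get a retract of $P$ through a semifree model of $P/(\Lambda^{>m}Z\cdot P)$, then filter by $\Lambda Z$-wordlength, apply $mcat_{\Lambda W}\le cat(\Lambda W)=n$ to the fiber-type subquotients (modules with trivial $\Lambda^{+}Z$-action), and glue layer by layer to reach a quotient of bounded total wordlength. However, the proposal has a genuine gap exactly where you locate the ``real work,'' and that work is the content of the theorem. First, you never specify the sub-DG-modules along which the layer-by-layer cutting happens. The naive filtration of $P/(\Lambda^{>m}Z\cdot P)$ by $Z$-wordlength, with each layer $\Lambda^{j}Z\otimes(\Lambda W\otimes U,\bar d)$ truncated at $W$-wordlength $n$, is not stable under $d$: since $dU(k)\subset\Lambda Z\otimes U(k-1)$ and $d(1\otimes W)$ may have a component in $\Lambda^{1}Z\otimes\Lambda W$, the differential sends the $(p,q)$-bigraded piece into $(p+1,q-1)$ as well as $(p,\ge q)$. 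The paper handles this by introducing the nested submodules $I_{m+1}=P^{\geq m+1,\geq 0}$, $I_k=I_{k+1}+P^{\geq k,\geq(m+1-k)(n+2)-1}$, whose $d$-closure forces the $W$-thresholds to step by $n+2$ per $Z$-layer; it is this choice, together with the identification $\Lambda^{>n}W\cdot P^{k,\geq(m-k)(n+2)}\cong I_k/I_{k+1}$, that produces $(m+1)(n+2)-2$. Without defining these cut points and checking closure, your count ``one loses $n+2$ per base-layer'' is an assertion, not a derivation; and your stated reason for the extra unit (that $(\Lambda W,\bar d)$ is a quotient rather than a subalgebra of $\Lambda V$) does not explain it, since that feature persists when $\Lambda V$ is minimal, where the bound improves --- the true source is the bidegree shift $(p,q)\mapsto(p+1,q-1)$ of $d$, which in the minimal case costs at least two units of $Z$-length.

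Second, the assembly itself is not carried out. It is not enough to have, for each layer, a semifree witness of $mcat_{\Lambda W}\le n$; one must inductively produce semifree $\Lambda V$-modules $P_k\simeq P/I_k$ together with morphisms $P_{k+1}\to P_k\to P_{k+1}$ composing to the identity and compatible with the projections $P/I_{k+1}\to P/I_k$, so that at the end the retraction of $P$ through $P_0$ composes back to $id_P$ (not merely up to homotopy at each stage, which would not obviously concatenate). The paper does this with a concrete pushout of the fiber-retract map $f$ against an injective comparison map $h$ into $P_{k+1}$, proves by a five-lemma argument on two short exact sequences that the pushout is quasi-isomorphic to $P/I_k$, and builds the backwards map using $h\circ g$, with Propositions \ref{commutative lift} and \ref{replace homotopy} used repeatedly to rigidify homotopies and restore semifreeness and injectivity. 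Naming ``mapping-cylinder replacements'' is not a substitute for this construction: as written, your argument establishes the correct framework but neither the specific quotient $P/(\Lambda^{>(m+1)(n+2)-2}V\cdot P)$ through which the quasi-isomorphism must factor, nor the existence of the required strictly commutative diagram \eqref{mcat graph}.
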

\begin{proof}
By hypothesis, there exists a $\Lambda Z$-semifree module $\overline{Q}$ quasi-isomorphic to $\overline{P}/(\Lambda^{>m} Z\cdot\overline{P})$ such that the following diagram of $\Lambda Z$-modules commutes.
$$
\xymatrix{
\overline{P} \ar[rd] \ar[rdd]_{pr} \ar[rr]^{id} & & \overline{P}\\
& \overline{Q} \ar[ur] \ar[d]_{\simeq} \\
& \overline{P}/(\Lambda^{>m} Z\cdot\overline{P})
}
$$
We use $pr$ to denote the natural projection in this proof. Let $\Lambda V\otimes_{\Lambda Z}-$ acting on the modules and morphisms of the above diagram. Observe that $P=\Lambda V\otimes_{\Lambda Z}\overline{P}$, and $\Lambda V\otimes_{\Lambda Z}(\overline{P}/(\Lambda^{>m} Z\cdot\overline{P})) = P/(\Lambda^{>m} Z\cdot P)$. Set $P_{m+1}=\Lambda V\otimes_{\Lambda Z}Q$, which is a $\Lambda V$-semifree module. Then we have the following commutative diagram of $\Lambda V$-modules.
$$
\xymatrix{
P \ar[rd] \ar[rdd]_{pr} \ar[rr]^{id} & & P\\
& P_{m+1} \ar[ur] \ar[d]_{\simeq} \\
& P/(\Lambda^{>m} Z\cdot P)
}
$$

Let $P^{\geq p,\geq q}$ denote the subspace $(\Lambda^p Z\otimes\Lambda^q W)\cdot P=\Lambda^{\geq p}Z\otimes\Lambda^{\geq q}W\otimes U$ of $P$. Note that $P^{\geq p,\geq q}$ is not closed under differential unless $q=0$. So it is not a DG submodule in general.

Denote the quotient space $P^{\geq p,\geq q}/P^{\geq p+1,\geq q}$ by $P^{p,\geq q}$. It is a DG module over $(\Lambda W, \bar{d})$, and is isomorphic to the module $(\Lambda^p Z,0)\otimes(\Lambda^{\geq q} W\otimes U,\bar{d})$, where $\bar{d}U(j)\subset U(j-1)$. Thus, $P^{p,\geq q}$ is $\Lambda W$-semifree. Moreover, it is $\Lambda W$-free when $P$ is a minimal $\Lambda V$-semifree module such that $dU(j)\subset \Lambda^+Z\otimes U(j-1)$.

We use $P^{p,[q,r]}$ denote the quotient $\Lambda W$-module $P^{p,\geq q}/P^{p,\geq r+1}$. It is isomorphic to $\bigoplus_{q\leq j\leq r} \Lambda^p Z\otimes\Lambda^j W\otimes U$ as graded vector spaces.

Then let $I_{m+1}=P^{\geq m+1,\geq 0}$ and define $I_k=I_{k+1}+P^{\geq k,\geq (m+1-k)(n+2)-1}$ inductively for $0\leq k\leq m$. As $dP^{\geq p,\geq q}\subset P^{\geq p+1,\geq q-1}+P^{\geq p,\geq q}$, all $I_k$ are closed under the differential. Then they are a DG submodules.

Now suppose we have constructed $\Lambda V$-semifree modules $P_{k+1},P_{k+2},\ldots,P_{m+1}$ for some $k<m+1$, such that for $k<i\leq m+1$, there are surjective quasi-isomorphisms $P_i \stackrel{\simeq}{\longrightarrow} P/I_i$, and morphisms $P_{i+1} \to P_i$, $P_i \to P_{i+1}$ make the following diagram commutative.
$$
\xymatrix{
P_{i+1} \ar[rd] \ar[rr]^{id} \ar[dd]_{\simeq} & & P_{i+1} \\
& P_i \ar[ur] \ar[d]_{\simeq} \\
P/I_{i+1} \ar[r]^{pr} & P/I_i
}
$$
Here we put $P_{m+2}=P$ and $I_{m+2}=0$. We will construct $P_k$ and the corresponding morphisms.

First observe that any surjective quasi-isomorphism $\widetilde{P}_{k+1}\to P_{k+1}$ gives a lift $P_{k+2}\to \widetilde{P}_{k+1}$ which makes the following diagram commutative.
$$
\xymatrix{
P_{k+2} \ar@{-->}[rd] \ar[rdd] \ar[rr]^{id} \ar[ddd]_{\simeq} & & P_{k+2} \\
& \widetilde{P}_{k+1} \ar[d]_{\simeq} \\
& P_{k+1} \ar[uur] \ar[d]_{\simeq} \\
P/I_{k+2} \ar[r]^{pr} & P/I_{k+1}
}
$$
So we can replace $P_{k+1}$ by $\widetilde{P}_{k+1}$.

Next consider the subspace $(P^{\geq k,\geq(m-k)(n+2)}+I_{k+1})/I_{k+1}$ of $P/I_{k+1}$. As
$$
dP^{\geq k,\geq(m-k)(n+2)} \subset P^{\geq k,\geq(m-k)(n+2)}+P^{\geq k+1,\geq(m-k)(n+2)-1} \subset P^{\geq k,\geq(m-k)(n+2)}+I_{k+1}
$$
and $Z\cdot P^{\geq k,\geq(m-k)(n+2)} \subset I_{k+1}$, this subspace is a DG submodule over $\Lambda V$ where the $\Lambda^+ Z$-action is trivial. So it is also a DG module over $(\Lambda W, \bar{d})$, and is isomorphic to $(\Lambda^k Z,0)\otimes(\Lambda^{\geq(m-k)(n+2)} W\otimes U,\bar{d}) \cong P^{k,\geq(m-k)(n+2)}$.

By $mcat_{\Lambda W}(P^{k,\geq(m-k)(n+2)}) \leq cat(\Lambda W)=n$, there exists a $\Lambda W$-semifree module $Q_k$ quasi-isomorphic to $P^{k,\geq(m-k)(n+2)}/(\Lambda^{>n}W\cdot P^{k,\geq(m-k)(n+2)})$ such that the following diagram commutes.
$$
\xymatrix{
P^{k,\geq(m-k)(n+2)} \ar[rd] \ar[rr]^{id} \ar[rdd]_{pr} & & P^{k,\geq(m-k)(n+2)} \\
& Q_k \ar[ur] \ar[d]_{\simeq} \\
& P^{k,\geq(m-k)(n+2)}/(\Lambda^{>n}W\cdot P^{k,\geq(m-k)(n+2)})
}
$$

$Q_k$ can also be recognized as a $\Lambda V$-module where the $Z$-action is trivial. So we can construct surjective $\Lambda V$-semifree resolutions $F(P^{k,\geq(m-k)(n+2)}) \stackrel{\simeq}{\longrightarrow} P^{k,\geq(m-k)(n+2)}$ and $F(P^{k,[(m-k)(n+2),(m-k)(n+2)+n]}) \stackrel{\simeq}{\longrightarrow} Q_k$. The latter notation follows from $\Lambda^{>n}W\cdot P^{k,\geq(m-k)(n+2)} = P^{k,\geq(m-k)(n+2)+n+1}$, which implies that $F(P^{k,[(m-k)(n+2),(m-k)(n+2)+n]})$ is also a $\Lambda V$-semifree resolution of $P^{k,[(m-k)(n+2),(m-k)(n+2)+n]}$. Then we have lifts $f$ and $g$ which make the following diagram commutative.
$$
\xymatrix{
F(P^{k,\geq(m-k)(n+2)}) \ar@{-->}[r]^(0.4){f} \ar[d]_p^{\simeq} & F(P^{k,[(m-k)(n+2),(m-k)(n+2)+n]}) \ar@{-->}[r]^(0.6){g} \ar[d]^{\simeq} & F(P^{k,\geq(m-k)(n+2)}) \ar[d]_p^{\simeq} \\
P^{k,\geq(m-k)(n+2)} \ar[r] \ar[rd]^{pr} & Q_k \ar[r] \ar[d]^{\simeq} & P^{k,\geq(m-k)(n+2)} \\
& P^{k,[(m-k)(n+2),(m-k)(n+2)+n]}
}
$$

Then $(g\circ f)\circ p=p\circ id=p$. As $p$ is a quasi-isomorphism, we have $g\circ f\sim id$ by Proposition \ref{commutative lift} (i). Applying Proposition \ref{replace homotopy} (ii), we can replace $F(P^{k,[(m-k)(n+2),(m-k)(n+2)+n]})$ by a larger module, and make the replacements $F,G$ of $f,g$ satisfying $G\circ F=id$. $F$ and $G$ also make the above commutative. So without loss of generality we can assume that $g\circ f=id$.

On the other hand, $P^{k,\geq (m-k)(n+2)+n+1} = P^{k,\geq (m+1-k)(n+2)-1} \cong I_k/I_{k+1}$. So $P^{k,[(m-k)(n+2),(m-k)(n+2)+n]} \cong (P^{\geq k,\geq (m-k)(n+2)}+I_{k+1})/I_k$ can be recognized as a subspace of $P/I_k$. It is actually a $\Lambda V$-submodule since the differential and the $\Lambda V$-action both preserve this subspace. So we have the following commutative diagram of $\Lambda V$-modules.
$$
\xymatrix{
F(P^{k,\geq(m-k)(n+2)}) \ar@{-->}[rr]^(0.6){h} \ar[rd]^{\simeq} \ar[dd]_f & & P_{k+1} \ar[rd]^{\simeq} \\
& P^{k,\geq(m-k)(n+2)} \ar@{^{(}->}[rr]^(0.6){\iota} \ar[dd]_{pr} & & P/I_{k+1} \ar[dd]_{pr} \\
F(P^{k,[(m-k)(n+2),(m-k)(n+2)+n]}) \ar[rd]^{\simeq} \\
& P^{k,[(m-k)(n+2),(m-k)(n+2)+n]} \ar@{^{(}->}[rr]^(0.65){\iota} & & P/I_k 
}
$$
Here $\iota$ is the inclusion and $h$ is a lift obtained by Proposition \ref{commutative lift}(ii). According to Proposition \ref{replace homotopy}(i) $h$ can be factored as an injective morphism $H:F(P^{k,\geq(m-k)(n+2)}) \to \widetilde{P}_{k+1}$ and a surjective quasi-isomorphism $\widetilde{P}_{k+1} \stackrel{\simeq}{\longrightarrow} P_{k+1}$. Then $P_{k+1}$ and $h$ can be replaced by $\widetilde{P}_{k+1}$ and $H$ respectively. So without loss of generality we assume that $h$ is injective.

Let $(F(P^{k,[(m-k)(n+2),(m-k)(n+2)+n]})\times P_{k+1})/\sim$ denote the pushout of $f$ and $h$, with $(f(x),0)\sim(0,h(x))$, i.e. $(f(x),-h(x))\sim 0$ for $x\in F(P^{k,\geq(m-k)(n+2)})$. Use $\phi$ and $\psi$ to denote the following maps.
\begin{align*}
& \phi: F(P^{k,[(m-k)(n+2),(m-k)(n+2)+n]}) \stackrel{\simeq}{\longrightarrow} P^{k,[(m-k)(n+2),(m-k)(n+2)+n]} \stackrel{\iota}{\longrightarrow} P/I_k \\
& \psi: P_{k+1} \stackrel{\simeq}{\longrightarrow} P/I_{k+1} \stackrel{pr}{\longrightarrow} P/I_k
\end{align*}
The morphism $F(P^{k,[(m-k)(n+2),(m-k)(n+2)+n]})\times P_{k+1} \to P/I_k$, $(a,b)\mapsto \phi(a)+\psi(b)$ vanishes on $(f(x),-h(x))$. So it induces a morphism $(\phi,\psi)$ from $(F(P^{k,[(m-k)(n+2),(m-k)(n+2)+n]})\times P_{k+1})/\sim$ to $P/I_k$. We claim that $(\phi,\psi)$ is a quasi-isomorphism.

The proof of this claim is a dual version of Lemma 13.3 in \cite{FHT}. As $h$ and $\iota: P^{k,\geq(m-k)(n+2)} \to P_{k+1}$ are injective, we have the following commutative diagram of short exact sequences.
$$
\xymatrix{
0 \ar[r] & F(P^{k,\geq(m-k)(n+2)}) \ar[r]^(0.68){h} \ar[d]_{\simeq} & P_{k+1} \ar[r] \ar[d]_{\simeq} & \text{coker\,} h \ar[r] \ar[d] & 0 \\
0 \ar[r] & P^{k,\geq(m-k)(n+2)} \ar[r]^(0.6){\iota} & P/I_{k+1} \ar[r] & \text{coker\,} \iota \ar[r] & 0
}
$$
By the five Lemma the induced morphism $\text{coker\,} h \to \text{coker\,} \iota$ is quasi-isomorphic.

Observe that the morphism $\chi: F(P^{k,[(m-k)(n+2),(m-k)(n+2)+n]}) \to (F(P^{k,[(m-k)(n+2),(m-k)(n+2)+n]})\times P_{k+1})/\sim$, $a\mapsto(a,0)$ is injective with cokernel $\text{coker\,} h$. Indeed, if $(a_1,0)\sim(a_2,0)$, there exists some $x\in F(P^{k,\geq(m-k)(n+2)})$ such that $f(x)=a_1-a_2$ and $-h(x)=0$. As $h$ is injective, we have $x=0$ and $a_1=a_2$. So $\chi$ is injective. Its image is identified with $(F(P^{k,[(m-k)(n+2),(m-k)(n+2)+n]})\times \text{im\,} h)/\sim$, then the cokernel is identified with
$$
(F(P^{k,[(m-k)(n+2),(m-k)(n+2)+n]})\times P_{k+1})/(F(P^{k,[(m-k)(n+2),(m-k)(n+2)+n]})\times \text{im\,} h) \cong \text{coker\,} h.
$$

On the other hand, $\text{coker\,} \iota$ is also the cokernel of the other $\iota$ from $P^{k,[(m-k)(n+2),(m-k)(n+2)+n]}$ to $P/I_k$. This follows from
\begin{align*}
\text{coker\,} \iota &= (P/I_{k+1})/P^{k,\geq(m-k)(n+2)} \\
& \cong (P/I_{k+1})/((P^{\geq k,\geq(m-k)(n+2)}+I_{k+1})/I_{k+1}) \\
& \cong (P/I_k)/((P^{\geq k,\geq(m-k)(n+2)}+I_{k+1})/I_k) \\
& \cong (P/I_k)/P^{k,[(m-k)(n+2),(m-k)(n+2)+n]}.
\end{align*}

The composition $(\phi,\psi)\circ\chi$ is exactly $\phi$. So we have the following commutative diagram of short exact sequences.
$$
\xymatrix{
0 \ar[r] & F(P^{k,[(m-k)(n+2),(m-k)(n+2)+n]}) \ar[r]^(0.42){\chi} \ar[d]_{\simeq} & (F(P^{k,[(m-k)(n+2),(m-k)(n+2)+n]})\times P_{k+1})/\sim \ar[r] \ar[d]_{(\phi,\psi)} & \text{coker\,} h \ar[r] \ar[d]_{\simeq} & 0 \\
0 \ar[r] & P^{k,[(m-k)(n+2),(m-k)(n+2)+n]} \ar[r]^(0.6){\iota} & P/I_k \ar[r] & \text{coker\,} \iota \ar[r] & 0
}
$$
Applying the five lemma again, we have that $(\phi,\psi)$ is a quasi-isomorphism. So the claim is proved.

Now we construct a morphism from $F(P^{k,[(m-k)(n+2),(m-k)(n+2)+n]})\times P_{k+1} \to P_{k+1}$ by $(a,b)\mapsto h\circ g(a)+b$. Then the image of $(f(x),-h(x))$ where $x\in F(P^{k,\geq(m-k)(n+2)})$ is $h\circ g\circ f(x)-h(x)=h(x)-h(x)=0$. So it induces a morphism from $(F(P^{k,[(m-k)(n+2),(m-k)(n+2)+n]})\times P_{k+1})/\sim$ to $P_{k+1}$. By definition, this morphism sends $(0,b)$ to $b$. So the identity map on $P_{k+1}$ can be factored as
$$
P_{k+1} \to F(P^{k,[(m-k)(n+2),(m-k)(n+2)+n]})\times P_{k+1} \to P_{k+1}, \quad b\mapsto (0,b) \mapsto b.
$$
Moreover, $(\phi,\psi)(0,b)=\psi(b)$.

Construct a surjective $\Lambda V$-semifree resolution $P_k \stackrel{\simeq}{\longrightarrow} (F(P^{k,[(m-k)(n+2),(m-k)(n+2)+n]})\times P_{k+1})/\sim$. Then we obtain a lift $P_{k+1}\to P_k$ which makes the following diagram commutative.
$$
\xymatrix{
P_{k+1} \ar@{-->}[rd] \ar[rdd] \ar[rr]^{id} \ar[ddd]_{\simeq} & & P_{k+1} \\
& P_k \ar[d]_{\simeq} \\
& (F(P^{k,[(m-k)(n+2),(m-k)(n+2)+n]})\times P_{k+1})/\sim \ar[uur] \ar[d]_{\simeq}^{(\phi,\psi)} \\
P/I_{k+1} \ar[r]^{pr} & P/I_k
}
$$

Therefore, we can construct $P_i\simeq P/I_i$ inductively and factor the identify map as $P_{i+1}\to P_i\to P_{i+1}$ for $i\leq m+1$. It leads to the following commutative diagram.
$$
\xymatrix{
P \ar[rd]^{\xi} \ar[rdd]_{pr} \ar[rr]^{id} & & P \\
& P_0 \ar[ur] \ar[d]_{\eta}^{\simeq} \\
& P/I_0
}
$$

Finally, since $\Lambda^{\geq (m+1)(n+2)-1} V\cdot P \subset I_0$, the projection $P\to P/I_0$ factors through $P/(\Lambda^{>(m+1)(n+2)-2} V\cdot P)$. Choose a surjective $\Lambda V$-semifree resolution $Q \stackrel{\simeq}{\longrightarrow} P/(\Lambda^{>(m+1)(n+2)-2} V\cdot P)$. We have the following commutative diagrams with lifts $\zeta$ and $\lambda$.
$$
\xymatrix{
P \ar@{-->}[r]^{\zeta} \ar[rd]_{pr} & Q \ar@{-->}[r]^{\lambda} \ar[d]_{\simeq} & P_0 \ar[d]_{\eta}^{\simeq} \\
& P/(\Lambda^{>(m+1)(n+2)-2} V\cdot P) \ar[r]^(0.75){pr} & P/I_0
}
$$
So we have $\eta\circ(\lambda\circ\zeta)=\eta\circ\xi$. Since $\eta$ is a quasi-isomorphism, we have $\lambda\circ\zeta\sim\xi$ by Proposition \ref{commutative lift} (i). Apply Proposition \ref{replace homotopy} (ii) again to replace $Q$ by a larger module, we can make $\lambda\circ\zeta=\xi$. Then we obtain the following commutative diagram, which implies $mcat_{\Lambda V}(P)\leq (m+1)(n+2)-2$.
$$
\xymatrix{
P \ar[rd]^{\zeta} \ar[rdd]_{pr} \ar[rrr]^{id} & & & P \\
& Q \ar[r]^{\lambda} \ar[d]_{\simeq} & P_0 \ar[ur] \\
& P/(\Lambda^{>(m+1)(n+2)-2} V\cdot P)
}
$$
\end{proof}

When $\Lambda V$ itself is a minimal Sullivan algebra, we have $dP^{\geq p,\geq q} \subset P^{\geq p+2,\geq q-1}+P^{\geq p,\geq q}$. In this case we can make $I_k$ a little larger. Precisely, set $I_k=I_{k+1}+P^{\geq k,\geq(m+1-k)(n+1)}$. Then the subspace $P^{\geq k,\geq(m-k)(n+1)}+I_{k+1}$ is closed under the differential. It follows that $(P^{\geq k,\geq(m-k)(n+1)}+I_{k+1})/I_{k+1}$ is a DG module over $\Lambda W$, and its submodule $\Lambda^{n+1} W \cdot [(P^{\geq k,\geq(m-k)(n+1)}+I_{k+1})/I_{k+1}]$ is isomorphic to $I_k/I_{k+1}$. Following the construction in Theorem \ref{main}, we can factor $id_P$ as
$$
P \to F(P/(\Lambda^{\geq (m+1)(n+1)} V\cdot P)) \to F(P/I_0) \to P.
$$
So we have the following estimation.

\begin{thm}\label{main for minimal}
With the hypothesis of Theorem \ref{main}, suppose in addition that $\Lambda V$ itself is a minimal Sullivan algebra. If $mcat_{\Lambda Z}(\overline{P})=m$ and $cat(\Lambda W)=n$, then $mcat_{\Lambda V}(P)\leq (m+1)(n+1)-1$.
\end{thm}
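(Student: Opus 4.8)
The plan is to rerun the inductive construction from the proof of Theorem \ref{main} essentially verbatim, the single new ingredient being that minimality of $\Lambda V$ lets us enlarge the ideals $I_k$, shrinking the $W$-step from $n+2$ to $n+1$. Concretely, I would first record the sharper estimate on the differential relative to the bigrading by $Z$-wordlength and $W$-wordlength. Since $\Lambda V=\Lambda Z\otimes\Lambda W$ is minimal we have $dZ\subseteq\Lambda^{\geq 2}Z$ and $dW\subseteq\Lambda^{\geq 2}(Z\oplus W)$, so any summand of $dw$ (for $w\in W$) that lowers the $W$-wordlength by one must sit in $\Lambda^{\geq 2}Z$, hence raises the $Z$-wordlength by at least two; together with $dU(k)\subseteq\Lambda Z\otimes U(k-1)$, which never lowers the $W$-wordlength, this gives
$$
dP^{\geq p,\geq q}\subseteq P^{\geq p,\geq q}+P^{\geq p+2,\geq q-1},
$$
in contrast with the weaker inclusion $dP^{\geq p,\geq q}\subseteq P^{\geq p,\geq q}+P^{\geq p+1,\geq q-1}$ used for Theorem \ref{main}.

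Next I would keep $I_{m+1}=P^{\geq m+1,\geq 0}$ but redefine $I_k=I_{k+1}+P^{\geq k,\geq(m+1-k)(n+1)}$ for $0\leq k\leq m$; in effect every occurrence of $(n+2)$ in the proof of Theorem \ref{main} gets replaced by $(n+1)$ and the shift index $(m-k)(n+2)$ by $(m-k)(n+1)$. The only place this forces a recheck is $d$-closedness: at stage $k$ the subspace $(P^{\geq k,\geq(m-k)(n+1)}+I_{k+1})/I_{k+1}$ must be a DG submodule over $\Lambda W$. By the sharper estimate, the part of $dP^{\geq k,\geq(m-k)(n+1)}$ not already lying in $P^{\geq k,\geq(m-k)(n+1)}$ is contained in $P^{\geq k+2,\geq(m-k)(n+1)-1}$, and since $(m-k)(n+1)-1\geq(m-k-1)(n+1)$ this lies in the level-$(k+2)$ summand $P^{\geq k+2,\geq(m-k-1)(n+1)}$ of $I_{k+2}\subseteq I_{k+1}$. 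This is exactly where the gain comes from: in Theorem \ref{main} the offending term only had $Z$-wordlength $\geq k+1$, so it needed the level-$(k+1)$ summand of $I_{k+1}$, which starts one $W$-level higher, forcing a gap of $n+2$. The identifications used in Theorem \ref{main} persist, namely $(P^{\geq k,\geq(m-k)(n+1)}+I_{k+1})/I_{k+1}\cong P^{k,\geq(m-k)(n+1)}$ over $\Lambda W$ and $\Lambda^{>n}W\cdot P^{k,\geq(m-k)(n+1)}=P^{k,\geq(m+1-k)(n+1)}\cong I_k/I_{k+1}$, because $(m-k)(n+1)+n+1=(m+1-k)(n+1)$.

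With these two changes the rest of the proof of Theorem \ref{main} transcribes without modification: apply $mcat_{\Lambda W}(P^{k,\geq(m-k)(n+1)})\leq cat(\Lambda W)=n$, pass to surjective $\Lambda V$-semifree resolutions, arrange $g\circ f=id$ and an injective lift $h$ using Propositions \ref{commutative lift} and \ref{replace homotopy}, form the pushout of $f$ and $h$, check that the resulting map $(\phi,\psi)$ is a quasi-isomorphism by two applications of the five lemma to the evident short exact sequences (the dual of Lemma 13.3 of \cite{FHT}), and descend to a surjection onto $P_{k+1}$ to produce $P_k\simeq P/I_k$ with $id_P$ factoring through $P_k$. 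Iterating from $k=m$ down to $k=0$ gives a factorization $P\to P_0\to P$ of $id_P$ with $P_0\simeq P/I_0$, and the same wordlength count shows $\Lambda^{\geq(m+1)(n+1)}V\cdot P\subseteq I_0$: a monomial of $V$-wordlength $\geq(m+1)(n+1)$ whose $Z$-wordlength is $k\leq m$ has $W$-wordlength $\geq(m+1)(n+1)-k\geq(m+1-k)(n+1)$, and if its $Z$-wordlength is $\geq m+1$ it lies in $I_{m+1}$. Hence $P\to P/I_0$ factors through $P/(\Lambda^{>(m+1)(n+1)-1}V\cdot P)$, and after enlarging the final semifree resolution as in Theorem \ref{main} we land in the diagram \eqref{mcat graph} at level $(m+1)(n+1)-1$, giving $mcat_{\Lambda V}(P)\leq(m+1)(n+1)-1$.

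The only genuinely new work is the sharper differential inclusion together with the accompanying check that the enlarged ideals $I_k$ still cut out $d$-closed subquotients; once that is in place every remaining step is a copy of the argument for Theorem \ref{main} with $n+2$ replaced by $n+1$, so I do not anticipate any further obstacle.
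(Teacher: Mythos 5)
Your proposal is correct and follows essentially the same route as the paper: minimality gives the sharper inclusion $dP^{\geq p,\geq q}\subset P^{\geq p,\geq q}+P^{\geq p+2,\geq q-1}$, one enlarges the ideals to $I_k=I_{k+1}+P^{\geq k,\geq(m+1-k)(n+1)}$ (so the window at stage $k$ starts at $(m-k)(n+1)$ and $\Lambda^{>n}W$ times it is exactly $I_k/I_{k+1}$), and the inductive construction of Theorem \ref{main} is then repeated verbatim, ending with $\Lambda^{\geq(m+1)(n+1)}V\cdot P\subset I_0$. Your closedness check via the level-$(k+2)$ summand of $I_{k+2}$ is precisely the point the paper's sketch relies on, so no gap remains.
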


A minimal Sullivan algebra $\Lambda V$ can be interpreted as a relative Sullivan algebra $\Lambda V^1\otimes\Lambda V^{\geq 2}$. So $cat(\Lambda V)$ is finite if both $cat(\Lambda V^1)$ and $cat(\Lambda V^{\geq 2})$ are. The converse statement follows from Proposition 9.6 and Theorem 9.3 of \cite{FHT2}. So we have obtained Corollary \ref{cat for minimal intro}.

\begin{cor}
Let $\Lambda V$ be a minimal Sullivan algebra. Then $cat(\Lambda V)<\infty$ if and only if both $cat(\Lambda V^1)$ and $cat(\Lambda V^{\geq 2})$ are finite.
\end{cor}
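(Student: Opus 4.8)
The plan is to read both implications off material already in place, the one nontrivial input being Theorem~\ref{main for minimal}. For the ``if'' direction I would view the minimal Sullivan algebra $\Lambda V$ as the minimal relative Sullivan algebra $\Lambda V=\Lambda V^1\otimes\Lambda V^{\geq 2}$ (as in the introduction), with base $\Lambda Z:=\Lambda V^1$ and fibre $\Lambda W:=\Lambda V^{\geq 2}$. Minimality, i.e. $dV\subset\Lambda^{\geq 2}V$, makes $\Lambda V^1$ a sub-CDGA — indeed a minimal Sullivan algebra on generators of degree $1$, since $dV^1$ has degree $2$ and so lies in $\Lambda^2V^1$ — while $\Lambda V^{\geq 2}$ acquires the induced minimal differential. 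I then apply Theorem~\ref{main for minimal} with $P=\Lambda V$, i.e. $U=\mathbbm{k}$ (so that the condition $dU(k)\subset\Lambda Z\otimes U(k-1)$ is vacuous and $\overline{P}=\Lambda V^1$). Combining this with the identifications $mcat_{\Lambda V^1}(\Lambda V^1)=cat(\Lambda V^1)$ and $mcat_{\Lambda V}(\Lambda V)=cat(\Lambda V)$ — the corollary of Theorem~\ref{mcat over non-minimal}, equivalently Hess's theorem — gives
$$
cat(\Lambda V)\ \leq\ \bigl(cat(\Lambda V^1)+1\bigr)\bigl(cat(\Lambda V^{\geq 2})+1\bigr)-1,
$$
which is the ``minimal'' half of Theorem~\ref{estimate cat intro}; in particular $cat(\Lambda V)<\infty$ once $cat(\Lambda V^1)$ and $cat(\Lambda V^{\geq 2})$ are both finite.

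For the ``only if'' direction I would invoke the classical monotonicity of the rational LS category, namely $cat(\Lambda V^1)\leq cat(\Lambda V)$ and $cat(\Lambda V^{\geq 2})\leq cat(\Lambda V)$, both recorded in \cite{FHT2} (Proposition~9.6 and Theorem~9.3). The second inequality is also a direct instance of the mapping theorem, since the quotient $\Lambda V\twoheadrightarrow\Lambda V/(V^1)\cong\Lambda V^{\geq 2}$ is a morphism of minimal Sullivan algebras that is surjective on generators. Hence finiteness of $cat(\Lambda V)$ forces finiteness of both $cat(\Lambda V^1)$ and $cat(\Lambda V^{\geq 2})$, completing the equivalence.

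I do not expect a genuine obstacle here: all the substance has been absorbed into Theorem~\ref{main for minimal}, and what remains is the bookkeeping of checking that $\Lambda V^1$, $\Lambda V^{\geq 2}$ and the splitting $\Lambda V=\Lambda V^1\otimes\Lambda V^{\geq 2}$ satisfy the hypotheses of Theorem~\ref{main} (immediate from $dV\subset\Lambda^{\geq 2}V$) together with correctly quoting the converse inequalities from \cite{FHT2}. If one wanted a self-contained treatment of the converse, the delicate point would be $cat(\Lambda V^1)\leq cat(\Lambda V)$: because $H^*(\Lambda V^1)$ need not inject into $H^*(\Lambda V)$, a naive cup-length bound does not work, and one has to pass through the word-length filtration and the Toomer invariant, as in \cite{FHT2}.
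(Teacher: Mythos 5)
Your proposal is correct and takes essentially the same route as the paper: the paper also obtains the ``if'' direction by viewing $\Lambda V$ as the relative Sullivan algebra $\Lambda V^1\otimes\Lambda V^{\geq 2}$ and applying Theorem~\ref{main for minimal} (equivalently the minimal case of Theorem~\ref{estimate cat intro}, with Hess's theorem identifying $mcat$ and $cat$), and it quotes exactly Proposition~9.6 and Theorem~9.3 of \cite{FHT2} for the converse. No gaps to report.
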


The following two simple examples show that this corollary can not be generalized to arbitrary relative Sullivan algebras.

\begin{ex}
Let $Z$ spanned by $z$ of degree 3, and $W$ spanned by $w$ of degree 2. Set $dz=0,dw=z$. Then $cat(\Lambda Z\otimes\Lambda W)=0$ but $cat(\Lambda W)=\infty$.
\end{ex}

Although $cat(\Lambda W)\leq cat(\Lambda Z\otimes\Lambda W)$ if $\Lambda Z\otimes\Lambda W$ is a minimal Sullivan algebra \cite[Theorem 9.3]{FHT2}, it is still possible that $cat(\Lambda Z)=\infty$ but $cat(\Lambda Z\otimes\Lambda W)$ is finite.

\begin{ex}
Let $Z$ spanned by $z$ of degree 2, and $W$ spanned by $w$ of degree 3. Set $dz=0,dw=z^2$. Then $\Lambda Z\otimes\Lambda W$ is minimal and its LS category is 1, but $cat(\Lambda Z)=\infty$.
\end{ex}

For any fixed $m$ and $n$, there are examples such that $cat(\Lambda Z)=m$, $cat(\Lambda W)=n$ and $cat(\Lambda Z\otimes\Lambda W)=(m+1)(n+1)-1$.

\begin{ex}[Example 1 of Section 30(b), \cite{FHT}]
Let $Z$ spanned by $x,y$, and $W$ spanned by $u,v$, where $|x|=2(n+1)$, $|y|=2(m+1)(n+1)-1$, $u=2$, $v=2(n+1)-1$. Set $dx=du=0$, $dy=x^{m+1}$, and $dv=u^{n+1}-x$. Then $cat(\Lambda Z)=m$, and $cat(\Lambda W)=n$. $\Lambda Z\otimes\Lambda W$ is quasi-isomorphic to $\Lambda u/(u^{(m+1)(n+1)})$, whose LS category is $(m+1)(n+1)-1$.
\end{ex}

In this example $\Lambda Z\otimes\Lambda W$ is not a minimal Sullivan algebra. So it remains open whether there are better estimations for Theorem \ref{main} and \ref{main for minimal}.

\subsection{Estimate the Toomer invariant}
The idea of Theorem \ref{main} can also be used to estimate the Toomer invariant of relative Sullivan algebras, although the hypothesis is a little different. This proof is simpler, as we can apply the four lemma.

\begin{lem}[Four lemma]
In the following commutative diagram of abelian groups, if $\eta,\lambda$ are injective and $\xi$ is surjective, then $\zeta$ is injective.
$$
\xymatrix{
A \ar[r] \ar[d]_{\xi} & B \ar[r] \ar[d]_{\eta} & C \ar[r] \ar[d]_{\zeta} & D \ar[d]_{\lambda} \\
A' \ar[r] & B' \ar[r] & C' \ar[r] & D'
}
$$
\end{lem}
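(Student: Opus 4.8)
The plan is to prove this by the standard diagram chase, using exactness of the two rows (which is part of the statement of the four lemma). Label the horizontal maps in the top row as $f_1\colon A\to B$, $f_2\colon B\to C$, $f_3\colon C\to D$, and in the bottom row as $f_1'\colon A'\to B'$, $f_2'\colon B'\to C'$, $f_3'\colon C'\to D'$, so that commutativity reads $\eta f_1 = f_1'\xi$, $\zeta f_2 = f_2'\eta$, and $\lambda f_3 = f_3'\zeta$. The goal is: given $c\in C$ with $\zeta(c)=0$, conclude $c=0$.

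First I would use injectivity of $\lambda$ to push $c$ back into the image of $f_2$: from $\lambda f_3(c) = f_3'\zeta(c) = 0$ we get $f_3(c)=0$, so by exactness at $C$ there is $b\in B$ with $f_2(b)=c$. Next, $f_2'\eta(b) = \zeta f_2(b) = \zeta(c) = 0$, so by exactness at $B'$ there is $a'\in A'$ with $f_1'(a')=\eta(b)$. Since $\xi$ is surjective, choose $a\in A$ with $\xi(a)=a'$. Then $\eta f_1(a) = f_1'\xi(a) = f_1'(a') = \eta(b)$, and injectivity of $\eta$ forces $f_1(a)=b$. Finally $c = f_2(b) = f_2 f_1(a) = 0$, since the composite $f_2 f_1$ vanishes by exactness at $B$. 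Hence $\zeta$ is injective.

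There is no genuine obstacle here; the only care needed is bookkeeping of where each hypothesis enters — injectivity of $\lambda$ lifts $c$ into $B$, surjectivity of $\xi$ produces the preimage $a$, and injectivity of $\eta$ closes the argument — together with the (implicit) use of exactness of the rows at $C$, at $B'$, and at $B$. I would also remark that the application in this paper only requires this single direction of the four lemma, applied to the long exact cohomology sequences coming from short exact sequences of DG modules, with the vertical maps being the comparison maps between the successive quotients $P/I_k$.
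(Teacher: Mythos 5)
Your diagram chase is correct and complete: each hypothesis (injectivity of $\lambda$, exactness at $C$, exactness at $B'$, surjectivity of $\xi$, injectivity of $\eta$, exactness at $B$) enters exactly where it should, and the paper itself states this four lemma without proof as a standard fact, so there is nothing to compare against beyond noting that yours is the standard argument. One small point: the statement as printed omits the (necessary) hypothesis that the rows are exact, which you correctly made explicit; in the paper's application the rows are the long exact cohomology sequences of the displayed short exact sequences of DG modules, so exactness holds there.
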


\begin{thm}\label{estimate e}
Let $\Lambda V=\Lambda Z\otimes\Lambda W$ be a relative Sullivan algebra, and $P$ be a $\Lambda V$-semifree module. Then $P$ has a $\Lambda Z$-module structure and assume that $e_{\Lambda Z}(P)=m$. Additionally, $P^{p,\geq q}$ is a $\Lambda W$-module. If $e_{\Lambda W}(P^{p,\geq q})\leq n$ for any $p$ and $q$, then $e_{\Lambda V}(P)\leq (m+1)(n+2)-2$. Moreover, if $\Lambda V$ itself is a minimal Sullivan algebra, then $e_{\Lambda V}(P)\leq (m+1)(n+1)-1$.
\end{thm}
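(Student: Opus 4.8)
The plan is to follow the architecture of the proof of Theorem \ref{main}, but to exploit that the Toomer invariant only records injectivity on cohomology: there is no need to build semifree modules or manipulate homotopies, so the argument collapses to a diagram chase with the four lemma. I would reuse the filtration from that proof. Put $I_{m+2}=0$, $I_{m+1}=P^{\geq m+1,\geq 0}=\Lambda^{>m}Z\cdot P$, and for $0\le k\le m$ set $I_k=I_{k+1}+P^{\geq k,\geq(m+1-k)(n+2)-1}$; exactly as in Theorem \ref{main} each $I_k$ is a DG submodule of $P$, and a short index computation gives $\Lambda^{>(m+1)(n+2)-2}V\cdot P\subset I_0$. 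Since the projection $P\to P/I_0$ factors through $P\to P/(\Lambda^{>(m+1)(n+2)-2}V\cdot P)$, it suffices to prove that $H(P)\to H(P/I_0)$ is injective.

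The inductive core is the claim that $H(P/I_{k+1})\to H(P/I_k)$ is injective for every $k\le m$. Here I import the identifications already established inside the proof of Theorem \ref{main}: the subspace $X_k:=(P^{\geq k,\geq(m-k)(n+2)}+I_{k+1})/I_{k+1}$ is a DG submodule of $P/I_{k+1}$, isomorphic as a $\Lambda W$-module (with $\Lambda^+Z$ acting by zero) to $P^{k,\geq(m-k)(n+2)}$; the submodule $I_k/I_{k+1}$ of $P/I_{k+1}$ equals $\Lambda^{>n}W\cdot X_k\cong\Lambda^{>n}W\cdot P^{k,\geq(m-k)(n+2)}$; and $P/I_k\cong(P/I_{k+1})/(I_k/I_{k+1})$, under which the image of $X_k$ is $X_k/(I_k/I_{k+1})$ and the cokernels $(P/I_{k+1})/X_k$ and $(P/I_k)/(X_k/(I_k/I_{k+1}))$ are canonically identified; call this common quotient $C_k$. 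The quotient maps then form a commuting ladder from the short exact sequence $0\to X_k\to P/I_{k+1}\to C_k\to 0$ to the short exact sequence $0\to X_k/(I_k/I_{k+1})\to P/I_k\to C_k\to 0$, with middle vertical map $P/I_{k+1}\to P/I_k$ and right vertical map the identity. Passing to long exact cohomology sequences and selecting the four consecutive terms $H^{*-1}(C_k)\to H^*(X_k)\to H^*(P/I_{k+1})\to H^*(C_k)$ of the top row: the vertical maps in the two outer columns are the identities on $H^{*-1}(C_k)$ and $H^*(C_k)$, while the vertical map $H^*(X_k)\to H^*(X_k/(I_k/I_{k+1}))$ in the second column is injective precisely because $e_{\Lambda W}(P^{k,\geq(m-k)(n+2)})\le n$ — the only place the $\Lambda W$-hypothesis enters. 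The four lemma then gives that the third vertical map, $H^*(P/I_{k+1})\to H^*(P/I_k)$, is injective.

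Composing these injections with $e_{\Lambda Z}(P)=m$, which says precisely that $H(P)\to H(P/I_{m+1})=H(P/(\Lambda^{>m}Z\cdot P))$ is injective, I conclude that $H(P)\to H(P/I_0)$ is injective, hence $e_{\Lambda V}(P)\le(m+1)(n+2)-2$. The minimal case follows the same pattern, now with the finer filtration of Theorem \ref{main for minimal} in place of that of Theorem \ref{main}: the modules $X_k$, the layers $I_k/I_{k+1}=\Lambda^{>n}W\cdot X_k$ and the cokernels $C_k$ are arranged in the same way, the identical four-lemma step (still fed by $e_{\Lambda W}(P^{p,\geq q})\le n$) applies, and now $\Lambda^{>(m+1)(n+1)-1}V\cdot P\subset I_0$, yielding $e_{\Lambda V}(P)\le(m+1)(n+1)-1$.

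I do not anticipate a genuine obstacle: as the paper remarks, this proof is simpler than that of Theorem \ref{main}. The substantive bookkeeping — that $X_k$ and $I_k/I_{k+1}$ are honest DG submodules, that $I_k/I_{k+1}=\Lambda^{>n}W\cdot X_k$, and that the two cokernels coincide — has already been done there, via five-lemma arguments, and can simply be cited. The one thing requiring mild care is the index arithmetic: checking $\Lambda^{>(m+1)(n+2)-2}V\cdot P\subset I_0$ in the general case, and confirming that the finer filtration in the minimal case remains differential-stable with $\Lambda^{>(m+1)(n+1)-1}V\cdot P\subset I_0$.
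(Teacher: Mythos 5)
Your proposal is correct and follows essentially the same route as the paper's proof: the same filtration $I_k$ built from the bigraded pieces $P^{\geq p,\geq q}$, the same ladder of short exact sequences comparing $P/I_{k+1}$ with $P/I_k$, the four lemma fed by $e_{\Lambda W}(P^{p,\geq q})\leq n$ on the subquotient and by $e_{\Lambda Z}(P)=m$ at the top, and the final factorization through $P/(\Lambda^{>(m+1)(n+2)-2}V\cdot P)$ (respectively the finer filtration in the minimal case). Your explicit passage to the long exact cohomology sequences is just the spelled-out form of the paper's four-lemma step, so there is nothing to change.
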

\begin{proof}
First observe that $P$ is $\Lambda Z$-semifree, and all $P^{p,\geq q}$ are $\Lambda W$-semifree. So by the hypothesis that $e_{\Lambda Z}(P)=m$, the projection $P \to P/(\Lambda^m Z\cdot P)$ is injective on cohomology. Then similar to the Proof of Theorem \ref{main}, let $I_{m+1}=P^{\geq m+1,\geq 0}$ and $I_k=I_{k+1}+P^{\geq k,\geq (m+1-k)(n+2)-1}$ inductively for $0\leq k\leq m$, or $I_k=I_{k+1}+P^{\geq k,\geq (m+1-k)(n+1)}$ if $\Lambda V$ is minimal.

By hypothesis all $e_{\Lambda W}(P^{p,\geq q})\leq n$. Thus, the projection
$$
P^{k,\geq (m-k)(n+2)} \to P^{k,\geq (m-k)(n+2)}/(\Lambda^{>n} W\cdot P^{k,\geq (m-k)(n+2)}) = P^{k,[(m-k)(n+2), (m-k)(n+2)+n]}
$$
(respectively $P^{k,\geq (m-k-1)(n+1)} \to P^{k,[(m-k-1)(n+1), (m-k)(n+1)-1]}$ when $\Lambda V$ is minimal) is injective on cohomology.

As discussed in the proof of Theorem \ref{main}, $P^{k,[(m-k)(n+2), (m-k)(n+2)+n]} \cong (P^{\geq k,\geq (m-k)(n+2)}+I_{k+1})/I_k$ can be recognized as a submodule of $P/I_k$. Also $P^{k,\geq (m-k)(n+2)} = (P^{\geq k,\geq (m-k)(n+2)}+I_{k+1})/I_{k+1}$. So we have the following commutative diagram of short exact sequences.
$$
\xymatrix{
0 \ar[r] & P^{k,\geq (m-k)(n+2)} \ar[r] \ar[d]_{pr} & P/I_{k+1} \ar[r] \ar[d]_{pr} & P/(P^{\geq k,\geq (m-k)(n+2)}+I_{k+1}) \ar[r] \ar[d]_{\simeq} & 0 \\
0 \ar[r] & P^{k,[(m-k)(n+2), (m-k)(n+2)+n]} \ar[r] & P/I_k \ar[r] & P/(P^{\geq k,\geq (m-k)(n+2)}+I_{k+1}) \ar[r] & 0
}
$$

The left vertical morphism is injective on cohomology, and the right one is quasi-isomorphic. By the four lemma, the middle one is also injective on cohomology. Therefore, we have obtained a sequence of projections
$$
P \to P/I_{m+1} \to P/I_m \to \ldots \to P/I_1 \to P/I_0
$$
which are all injective on cohomologies. The composition of these projections can be factored as $P \to P/(\Lambda^{\geq (m+1-k)(n+2)-1} V\cdot P) \to P/I_0$, which implies that the projection $P \to P/(\Lambda^{\geq (m+1)(n+2)-1} V\cdot P)$ is injective on cohomology. Therefore, $e_{\Lambda V}(P) \leq (m+1)(n+2)-2$. A similar discussion shows that $e_{\Lambda V}(P) \leq (m+1)(n+1)-1$ if $\Lambda V$ is minimal.
\end{proof}

In the case that $P=\Lambda V$, $P^{p,\geq q} \cong (\Lambda^p Z,0)\otimes(\Lambda^{\geq q} W,\bar{d})$. So $e_{\Lambda W}(P^{p,\geq q})\leq n$ if any only if $e_{\Lambda W}(\Lambda^{\geq q} W)\leq n$. Then Theorem \ref{estimate e intro} follows. 

\bibliographystyle{plain}
\bibliography{refs}

\end{document}